\newcommand{\R}{\mathbb R}
\newcommand{\be}{\begin{equation}}
\newcommand{\ee}{\end{equation}}
\newcommand{\deriv}[2]{\frac{d #1}{d #2}}
\newcommand{\pderiv}[2]{\frac{\partial #1}{\partial #2}}
\newtheorem{thm}{Theorem}[section]
\newtheorem{lem}[thm]{Lemma}
\newtheorem{prop}[thm]{Proposition}
\newtheorem{rem}{Remark}
\newtheorem{conj}[thm]{Conjecture}
\numberwithin{equation}{section}
\newcommand{\userName}{Ifan Johnston \quad Vassili Kolokoltsov}
\begin{document}
\title{Mixed linear fractional boundary value problems}
\author{\userName}
\maketitle

\begin{abstract}
In this article we obtain two-sided estimates for the Greens function of fractional boundary value problems on $\R_+ \times \R_+ \times \R^d$ of the form 
\[(-\prescript{}{t_1}D^\beta_{0+*} - \prescript{}{t_2}D^\gamma_{0+*})u(t_1, t_2, x) = L_{x}u(t_1, t_2, x),\]
with some prescribed boundary functions on the boundaries $\{0\} \times \R_+ \times \R^d$ and $\R_+ \times\{0\}\times \R^d$. The operators $\prescript{}{t_1}D^\beta$ and $\prescript{}{t_1}D^\gamma$ are Caputo fractional derivatives of order $\beta, \gamma \in (0, 1)$ and $L_{x}$ is the generator of a diffusion semigroup: $L_x= \nabla \cdot(a(x) \nabla)$ for some nice function $a(x)$. The Greens function of such boundary value problems are decomposed into its components along each boundary, giving rise to a natural extension to the case involving $k \geq 2$ number of fractional derivatives on the left hand side.
\end{abstract}

\section{Introduction}
In the recent article \cite{johnston2019green}, we obtained two-sided estimates for the fundamental solution of fractional evolution equations of the form
\[-\prescript{}{t}D^\beta_{0+*} u(t,x) = L_x u(t,x),\]
where $\prescript{}{t}D^\beta_{0+*}$ is a Caputo-Dzherbashyan (CD) fractional derivative of order $\beta \in (0, 1)$ and $L_x$ is either the generator of a diffusion process, or a stable-like process (i.e, either a second order uniformly elliptic operator, or a spatially homogeneous pseudo-differential operator with variable coefficients). 
In this article we obtain two-sided estimates for the Greens function of the following boundary value problem,
\begin{align}\label{eqintro:bidimeqn}
-\prescript{}{t_1}D^\beta_{0+*} u(t_1, t_2, x) - \prescript{}{t_2}D_{0+*}^\gamma u(t_1, t_2, x) & = L_x u(t_1, t_2, x),\\
u(0, t_2, x) &= \phi_1(t_2, x),\nonumber\\
u(t_1, 0, x) &= \phi_2(t_1, x).\nonumber
\end{align}
See Section \ref{sec:mixedlinevol} for details on what is the object that we estimate. In Section \ref{sec:extndhigherdim} we look at higher dimensional version of (\ref{eqintro:bidimeqn}) in the sense that we have $k$ fractional derivatives on the left hand side, each acting on a different variable, 
\be\label{eqintro:kdimeqn}
-\sum_{i=1}^k \prescript{}{t_i}D^{\beta_{i}}_{0+*} u(t, x) = L_x u(t, x),\ee
where $(t, x)\in \R^k_+ \times \R^d$, with some specified boundary behaviour. 

The estimates obtained in this article can be used to study more general CD-type evolution equations (see \cite[Section 8.5]{kolokoltsov2019differential}) of the form
\be\label{eq:generalevol}
-\sum_{i=1}^k \prescript{}{t_i}D^{\nu_i(t_i, \cdot)}_{0+*} u(t, x) = L_x u(t, x),
\ee
where each $\nu_i(t_i, \cdot)$ is a L\'evy-type kernel, under the assumption that each $\nu_i(t_i, \cdot)$ has a density which is comparable to the density of a $\beta_i$-stable process. This was done for the case $k=1$ in \cite{johnston2019green}, so we do not repeat it here. Another natural (and essentially straightforward) extension of the estimates obtained in this article would be to replace $L_x$ with a homogeneous pseudo-differential operator with variable coefficients which generates a stable-like (Feller) process.

Boundary value problems such as (\ref{eqintro:bidimeqn}) and (\ref{eqintro:kdimeqn}) can be found in many areas of mathematics. A particularly noteworthy application can be found in the mathematics of insurance. Consider $k$ processes $(X_{t_1}^{\beta_1}(s), \cdots, X_{t_k}^{\beta_k}(s))$, where each $X^{\beta_i}_{t_i}(s)$ is generated by $-\prescript{}{t_i}D^{\beta_i}_{0+*}$. If each process corresponds to the wealth of a company, then whenever one of the coordinates hit zero, one of the companies have defaulted. Insurance companies are interested the \emph{ruin probability}, which is the probability of one of companies defaulting before a finite time horizon $T$. That is, if $\tau_0^{\beta_i}$ denotes the first time the process $X^{\beta_i}_{t_i}(s)$ hits zero,
\[\tau_0^{\beta_i} := \inf \{s > 0 : X_{t_i}^{\beta_i}(s) \leq 0\},\]
then the ruin probability is the quantity
\[\Psi(t_i, T) = \mathbb P[\tau_0^{\beta_i} < T].\]
See \cite{djehiche1993ruin, MR3520310, MR3515875, MR3007885,MR3343416,kumar2018fractional} for ruin probabilities of multidimensional risk models, or \cite{Asmussen2010ruin} for a broader treatment of ruin probabilities. Fractional version of compound Poisson processes are also of interest when looking at insurance risk processes, see \cite{leonenko2019limit}.
Similar kinds of questions also appear when looking at barrier options under one-dimensional Markov models, see \cite{MR3015232}. It is natural to consider multi-dimensional versions of these, \cite{MR3515877}, as investors often deal with basket options. A further natural appearance comes when considering portfolios of credit derivative obligations (CDO), which can be described by a Markov process in $\R^k_+$. Reaching a boundary of dimension $k - n$ means that $n$ out of $d$ bonds underlying the portfolio of CDOs have defaulted. It is natural in this setting to consider spatially non-homogeneous processes, since the behaviour of the processes should feel the approach to the boundary, which is not the case for L\'evy processes. This is then the setting of problem (\ref{eq:generalevol}). The series of articles \cite{scalas2000fractional, mainardi2000fractional, gorenflo2001fractional}, give a nice overview of the usage of fractional calculus and jump-diffusion processes in finance.

Another popular model these days is the so called Pearson diffusion, and also the fractional version, which are diffusion processes with polynomial diffusion coefficients, see \cite{leonenko2013correlation}. 
Fractional models are also finding new footing in theoretical physics, via fractional and non-local Schr\"odinger operators, see for example the articles \cite{kaleta2019zero, kaleta2018contractivity}.

Of more general interest in finance are \emph{affine processes} which live in $\R^k_+\times \R^d$, see \cite{duffie2003affine}.
Our final motivation for considering stable processes on $\R^2_+$ (i.e, (\ref{eqintro:bidimeqn}) without the spatial operator $L_x$), is the topic of limit order books. A simplified model would be that one coordinate of $X^{\beta_1, \beta_2}_{t_1, t_2}(s)$ is the volume of trades available at the best buy price while the other is the volume at the best sell price. The event that this process hits the boundary means that there are no more trades offered at that price and thus a price change occurs. These ideas will be developed in a forthcoming paper. See \cite{cont2012order, hambly2018limit} and references therein for related attempts at modelling order books using Brownian motions on the orthant and reflected SPDEs. 
\section{Preliminaries}
\subsection{Fractional derivatives and stable processes}
We begin by fixing some definitions and notations. 
For an open or closed convex subset $S$ of $\R^d$, $C(S)$ is the Banach space of continuous functions on $S$ equipped with the sup-norm, $C_\infty(S)$ is the closed subspace of $C(S)$ consisting of functions vanishing at infinity. $C^k(S)$ is a Banach space of $k$ times continuously differential functions with bounded derivatives on $S$ with the norm being the sum of the sup norms of the function itself and all its partial derivatives up to and including order $k$. 

For a subset $A \subset S$, let
\[C_{const A}(S) = \{f \in C(S)~:~\left.f\right|_A \text{ is a constant}\}.\]
The (left) Riemann-Liouville (RL) integral of order $\alpha >0$ is defined as
\[I^\alpha_af(t) = I^\alpha_{a+}f(t) = \frac 1{\Gamma(\beta)}\int_a^t (t-r)^{\beta-1}f(r)~\mathrm dr.\]
Then the fractional Caputo-Dzherbashyan (CD) of order $\beta \in (0, 1)$ is defined as
\[D^\beta_{a+*}f(t) = I^{1-\beta}_a\left[\deriv{}{t}f\right](t) = \frac 1{\Gamma(1-\beta)}\int_a^t (t-r)^{-\beta}\left[\deriv{}{r}f\right](r)~\mathrm dr, \quad t > a.\]
After some straightforward calculations, the CD derivative can be rewritten for smooth enough $f$ as
\be\label{eqdef:Cap}
D^\beta_{a+*}f(t) = \frac 1{\Gamma(-\beta)} \int_0^{t-a} \frac{f(t-r) - f(t)}{r^{1+\beta}}~\mathrm dr + \frac{f(t)-f(a)}{\Gamma(1-\beta)}{(t-a)^\beta}.
\ee
For $a = -\infty$ (and for smooth bounded integrable functions), the operator $D^{\beta}_{-\infty +*}$ is known as the fractional derivative in generator form,
\[
\deriv{^\beta}{x^\beta}f(x) := D^\beta_{-\infty + *} f(x) = \frac 1{\Gamma(-\beta)}\int_0^\infty \frac{f(x-z) - f(x)}{z^{1+\beta}}~\mathrm dz.
\]
The operators $-d^\beta f/dx^\beta$ with $\beta \in (0, 1)$ generate stable L\'evy subordinators (with inverted direction so that they are decreasing instead of increasing), see \cite[Chapter 3]{meerschaert2012stochastic}. Note that by (\ref{eqdef:Cap}), the CD derivative $D^\beta_{a+*}$ is obtained from $D^\beta_{-\infty +*}$ by the restriction of its action to the space $C_{const(-\infty, a]}(\R)$. Probabilistically, this means that for $\beta \in (0, 1)$ the process $\left(X^\beta_{x}(s)\right)_{s\geq 0}$ generated by $-D^\beta_{a+*}$ is a decreasing $\beta$-stable processes which is absorbed at $a$ on an attempt to cross it. We denote by $p^\alpha_s(t, r)$ the transition densities of the process $X^\alpha_t(s)$, and by $\tau_0^\alpha$ the time that $X_t^\alpha$ exits $(0, t]$, that is,
\[\tau_0^\alpha = \inf\{s > 0: X^\alpha_t(s) \leq 0\}.\]
Let $\mu_0^\alpha(s)$ be the density of the r.v $\tau_0^\alpha$.
\begin{rem}
The density $\mu_0^\alpha$ exists and is continuous due to classical results from the theory of stable processes.
\end{rem}
The density $\mu_0^\alpha(s)$ is given by,
\begin{align}\label{eq:dnstextraw}
\mu_0^\alpha(s) = \pderiv{}{s}\mathbb P\left[\tau_0^\alpha> s\right] &= \pderiv{}{s}\int_0^{t} p_s^\alpha (t, r)\mathrm dr\nonumber\\
& = \frac {t}\alpha s^{-1-\frac 1\alpha}w_\alpha (ts^{-1/\alpha}).
\end{align}
see \cite[Corollary 3.1]{meerschaert2004limit}, where $w_\alpha(z)$ is the density of an $\alpha$-stable random variable. We can also write this density conveniently as
\be\label{eq:dnstext}
t^\alpha \mu_0^\alpha(t^\alpha s) = \frac 1\alpha s^{-1-\frac 1\alpha} w_\alpha(s^{-\frac 1\alpha}).
\ee
The densities $w_\alpha(r)$ are one of the most important tools in studying the Greens function of evolution equations which involve fractional derivatives (of order at most 1). For $\alpha \in (0, 1)$ the density $w_\alpha(r)$ has the following asymptotic behaviour in a neighbourhood of $0$, \cite[Theorem~5.4.1]{zolotarev1999chance}
\be\label{eq:smlstbl}
w_\alpha(r) \sim C_{\alpha} r^{-\frac{2-\alpha}{2(1-\alpha)}}\exp\{-c_\alpha r^{-\frac{\alpha}{1-\alpha}}\}:= f_\alpha(r), \quad r \rightarrow 0,
\ee
and in a neighbourhood of $\infty$,
\[w_\alpha(r) \sim \tilde{C_\alpha} r^{-1-\alpha}, \quad r \rightarrow \infty.\]
\begin{rem}
One can show (\ref{eq:smlstbl}) by inverting the Laplace transform 
\[\exp\{-\lambda^\alpha\} = \mathbb E[ \exp\{-\lambda X^\alpha(1)\}] = \int_0^\infty e^{-\lambda x} w_\alpha(x)~\mathrm dx,\]
and applying the method known as the saddle point method.
\end{rem}
Due to the positivity of $w_\alpha(x)$, we can combine these behaviours so that there exists constants $C, \tilde{C} > 0$ such that
\be\label{eq:indistbl}
w_\alpha(r) \asymp C\mathbf 1_{\{r < 1\}} f_\alpha(r) +\tilde{C}\mathbf 1_{\{r \geq 1\}} r^{-1-\alpha}.
\ee
We will also be using the asymptotic behaviour of $\mu_\alpha^{0, t}(s)$, which follows from the asymptotic behaviour of $w_\alpha(s)$.
\begin{lem}\label{lem:extasmp}
For $\alpha \in (0, 1)$ the density $\mu_0^\alpha(s)$ of $\tau_0^\alpha$ has the following asymptotic behaviour at $0$ and $\infty$,
\[t^\alpha \mu_0^\alpha(t^\alpha s)\sim \left\{
\begin{array}{lc}
c_\alpha, & s \rightarrow 0,\\
c_\alpha s^{-1+ \frac 1{2(1-\alpha)}}\exp\{-c s^{\frac 1{1-\alpha}}\}, & s \rightarrow \infty.
\end{array}
\right.
\]
for some constants $c_\alpha > 0$.
\end{lem}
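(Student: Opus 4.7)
The identity that does all the work is already in hand: by (\ref{eq:dnstext}) we have the scaling relation
\[
t^\alpha \mu_0^\alpha(t^\alpha s) = \frac{1}{\alpha}\, s^{-1-\frac{1}{\alpha}}\, w_\alpha\!\left(s^{-\frac{1}{\alpha}}\right),
\]
so the plan is simply to feed the two known asymptotic regimes of $w_\alpha$ into the right-hand side. Since $s^{-1/\alpha}\to\infty$ as $s\to 0^+$, the large-argument asymptotic $w_\alpha(r)\sim \tilde C_\alpha r^{-1-\alpha}$ applies, giving $w_\alpha(s^{-1/\alpha})\sim \tilde C_\alpha s^{(1+\alpha)/\alpha}$; plugging this in, the $s$ powers cancel and we are left with the constant $c_\alpha = \tilde C_\alpha/\alpha$. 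That covers the $s\to 0$ case.

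For $s\to\infty$ we have $s^{-1/\alpha}\to 0$, so I would substitute the small-argument asymptotic (\ref{eq:smlstbl}):
\[
w_\alpha\!\left(s^{-\frac{1}{\alpha}}\right) \sim C_\alpha\, \bigl(s^{-\frac{1}{\alpha}}\bigr)^{-\frac{2-\alpha}{2(1-\alpha)}} \exp\!\left\{-c_\alpha \bigl(s^{-\frac{1}{\alpha}}\bigr)^{-\frac{\alpha}{1-\alpha}}\right\}.
\]
The only non-trivial bookkeeping is simplifying the exponents. The exponent in the exponential becomes $\frac{1}{\alpha}\cdot\frac{\alpha}{1-\alpha} = \frac{1}{1-\alpha}$, matching the stated $\exp\{-c\, s^{1/(1-\alpha)}\}$. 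The total power of $s$ in the prefactor is
\[
-1-\frac{1}{\alpha}+\frac{1}{\alpha}\cdot\frac{2-\alpha}{2(1-\alpha)} = \frac{-2\alpha(1-\alpha)-2(1-\alpha)+(2-\alpha)}{2\alpha(1-\alpha)} = \frac{2\alpha-1}{2(1-\alpha)} = -1+\frac{1}{2(1-\alpha)},
\]
which is exactly the exponent claimed in the lemma. Multiplication by the prefactor $\tfrac{1}{\alpha}$ absorbs into the constant $c_\alpha$.

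There is really no obstacle here beyond the exponent arithmetic; both cases are one-line substitutions into the scaling identity (\ref{eq:dnstext}), and the continuity remark following the statement of $\mu_0^\alpha$ together with the classical asymptotics (\ref{eq:smlstbl}) of $w_\alpha$ justify passing the asymptotic relation $\sim$ through the algebraic manipulation. The mild subtlety to note is that the constant $c_\alpha$ (and the symbol $c$ in the exponent) are allowed to change from line to line and need not coincide with the constants $C_\alpha, c_\alpha$ appearing in (\ref{eq:smlstbl}).
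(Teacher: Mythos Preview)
Your proof is correct and follows essentially the same approach as the paper: both arguments plug the large- and small-argument asymptotics of $w_\alpha$ into the scaling identity (\ref{eq:dnstext}) and simplify the resulting exponents. Your version merely spells out the exponent arithmetic in more detail than the paper does.
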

\begin{proof}
Since $w_\alpha(r)\sim r^{-1-\alpha}$ as $r\rightarrow \infty$, then $w_\alpha(r^{-\frac 1\alpha})\sim r^{1+\frac 1\alpha}$ as $r \rightarrow 0$. Thus using (\ref{eq:dnstext}), we have for $s \rightarrow 0$,
\[t^\alpha\mu_0^\alpha(t^\alpha s) = c_\alpha s^{-1-\frac 1\alpha} w_\alpha(s^{-\frac 1\alpha})\sim c_\alpha s^{-1- \frac 1\alpha}s^{1+\frac 1\alpha}= c_\alpha.\]
Using (\ref{eq:smlstbl}), note that $w_\alpha(r^{-\frac 1\alpha}) \sim f_\alpha(r^{-\frac 1\alpha})$ for $r \rightarrow \infty$. Thus for $s\rightarrow \infty$,
\begin{align*}
t^\alpha \mu_0^\alpha(t^\alpha s)= c_\alpha s^{-1-\frac 1 \alpha}w_\alpha(s^{-\frac 1\alpha})& \sim c_\alpha s^{-1-\frac 1\alpha}f_\alpha(s^{-\frac 1\alpha})\\
& = c_\alpha s^{-1 + \frac 1{2(1-\alpha)}} \exp\left\{-c_\alpha s^{\frac 1{1-\alpha}}\right\},
\end{align*}
as claimed.
\end{proof}
Let $Y_x(s)$ be a diffusion process with generator $L = \nabla\cdot(a(x)\nabla)$ for some symmetric measurable function $a$ on $ \R^d$. The estimates of Aronson, \cite{aronson1967bounds}, say that the transition densities $G^Y(s, x,y)$ of $Y_x(s)$ satisfy the following two-sided Gaussian estimates for all $s > 0$,
\be\label{eq:arnsn}
G^Y(s, x, y) \asymp s^{-\frac d2} \exp\left\{-c \frac{|x-y|^2}{s}\right\}.
\ee
Let $X^\alpha_r(s)$ be the process (independent of $Y_x(s)$) generated by $-D^{\alpha}_{0+*}$ (cf. \ref{eqdef:Cap}), which is a decreasing $\beta$-stable process absorbed at $0$ on an attempt to cross it. The transition density of the process $(Y_x(s), X^\alpha_r(s))$ is given by
\[G^{Y, \gamma}(s, r, x, y) = G^Y(s, x, y) s^{-\frac 1\gamma}w_\gamma(rs^{-\frac 1\gamma}).\]
The following result is obtained by applying Aronsons estimate for $G^Y$ and (\ref{eq:indistbl}) for $w_\gamma$.
\begin{lem}\label{lem:spcestm}
The transition density of $(X^\gamma_r(s), Y_x(s))$ satisfy the following estimates
\begin{itemize}
\item For $s \leq r^\gamma$,
\[G^{Y, \gamma}(s, r, x, y) \asymp C r^{-1-\gamma} s^{1-\frac d2}\exp\left\{ - c \frac{|x-y|^2}{s}\right\}.\]
\item For $s > r^\gamma$,
\[G^{Y, \gamma}(s, r, x, y) \asymp C r^{-\frac{2-\gamma}{2(1-\gamma)}} s^{\frac 1{2(1-\gamma)} -\frac d2}\exp\left\{- c\frac{|x-y|^2}{s} - c s^{\frac 1{1-\gamma}} r^{-\frac \gamma{1-\gamma}}\right\}\]
\end{itemize} 
\end{lem}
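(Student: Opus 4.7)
The plan is a direct computation: substitute the two known estimates into the product
\[G^{Y,\gamma}(s,r,x,y) = G^Y(s,x,y)\, s^{-1/\gamma} w_\gamma\!\left(r s^{-1/\gamma}\right),\]
and read off the two cases from where the argument $u := r s^{-1/\gamma}$ lies relative to $1$. Observe that $u \geq 1$ is exactly the condition $s \leq r^\gamma$, while $u < 1$ is exactly $s > r^\gamma$, so the dichotomy in (\ref{eq:indistbl}) for $w_\gamma$ is the same as the dichotomy in the statement. Throughout I would use Aronson's bound (\ref{eq:arnsn}) to supply the factor $s^{-d/2}\exp\{-c|x-y|^2/s\}$, so the only real work is evaluating $s^{-1/\gamma} w_\gamma(u)$ in each regime.

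For the first bullet ($s \leq r^\gamma$, equivalently $u \geq 1$), I would apply the polynomial tail $w_\gamma(u) \asymp u^{-1-\gamma}$ from (\ref{eq:indistbl}). Substituting $u = r s^{-1/\gamma}$ gives $u^{-1-\gamma} = r^{-1-\gamma} s^{(1+\gamma)/\gamma}$, and multiplying by the prefactor $s^{-1/\gamma}$ collapses to $r^{-1-\gamma} s$. Combined with Aronson this yields exactly the claimed $r^{-1-\gamma} s^{1-d/2}\exp\{-c|x-y|^2/s\}$.

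For the second bullet ($s > r^\gamma$, equivalently $u < 1$), I would use the small-argument asymptotic $w_\gamma(u) \asymp f_\gamma(u)$ from (\ref{eq:smlstbl})--(\ref{eq:indistbl}). Here the algebra is slightly heavier: substitute $u = rs^{-1/\gamma}$ into $u^{-(2-\gamma)/(2(1-\gamma))}\exp\{-c_\gamma u^{-\gamma/(1-\gamma)}\}$ and simplify the resulting power of $s$. The exponent of $s$ coming from the prefactor is
\[-\frac 1\gamma + \frac{1}{\gamma}\cdot\frac{2-\gamma}{2(1-\gamma)} \;=\; \frac{-2(1-\gamma)+(2-\gamma)}{2\gamma(1-\gamma)} \;=\; \frac{1}{2(1-\gamma)},\]
and the exponential factor becomes $\exp\{-c_\gamma s^{1/(1-\gamma)} r^{-\gamma/(1-\gamma)}\}$. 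Multiplying by Aronson's bound gives the stated estimate.

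This is essentially bookkeeping rather than genuinely hard analysis, so the only place to be careful is in the second case: one must keep track of the two exponential terms (the Gaussian in $x-y$ and the stretched exponential in $s,r$) and correctly combine the powers of $s$ contributed by the Aronson prefactor, the $s^{-1/\gamma}$ scaling of $w_\gamma$, and the polynomial factor inside $f_\gamma$. Since the asymptotic equivalences in (\ref{eq:indistbl}) and (\ref{eq:arnsn}) are genuine two-sided bounds, no additional integration or approximation is needed; the $\asymp$ relation propagates through the product.
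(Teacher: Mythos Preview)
Your proposal is correct and is exactly the approach the paper takes: the lemma is stated as an immediate consequence of Aronson's estimate (\ref{eq:arnsn}) for $G^Y$ together with the two-sided bound (\ref{eq:indistbl}) for $w_\gamma$, split according to whether $rs^{-1/\gamma}$ is above or below $1$. Your algebraic verification of the exponents in both regimes is accurate and matches the claimed formulas.
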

\subsection{Processes on the orthant}
Consider the process living on $\R^2_+$ defined by $X^{\beta, \gamma}_{t_1, t_2}(s):= (X_{t_1}^\beta(s), X_{t_2}^\gamma(s))$, where each coordinate a one-dimensional stable subordinator (with inverted sign) which absorbed at $0$, as described in the previous subsection. The process $X^{\beta, \gamma}_{t_1, t_2}(s)$ is generated by $-\prescript{}{t_1}D^\beta_{0+*}~-~\prescript{}{t_2}D_{0+*}^\gamma$, where $\beta, \gamma\in (0, 1)$, and it is started at $(t_1, t_2) \in \R_+\times \R_+$. 
For clarity, see Figure \ref{fig:ggplotorthant} for a typical sample path of $X^{\beta, \gamma}_{t_1, t_2}(s)$. We assume that the processes $X^\beta$ and $X^\gamma$ are independent. This independence assumption implies that the first time the process $X^{\beta, \gamma}_{t_1, t_2}$ hits the boundary of $\R_+\times \R_+$ is given by
\[\tau_0^{\beta, \gamma} = \min\left(\tau_0^\beta, \tau_0^\gamma\right).\]
\begin{figure}
\centering
\includegraphics[scale=0.3]{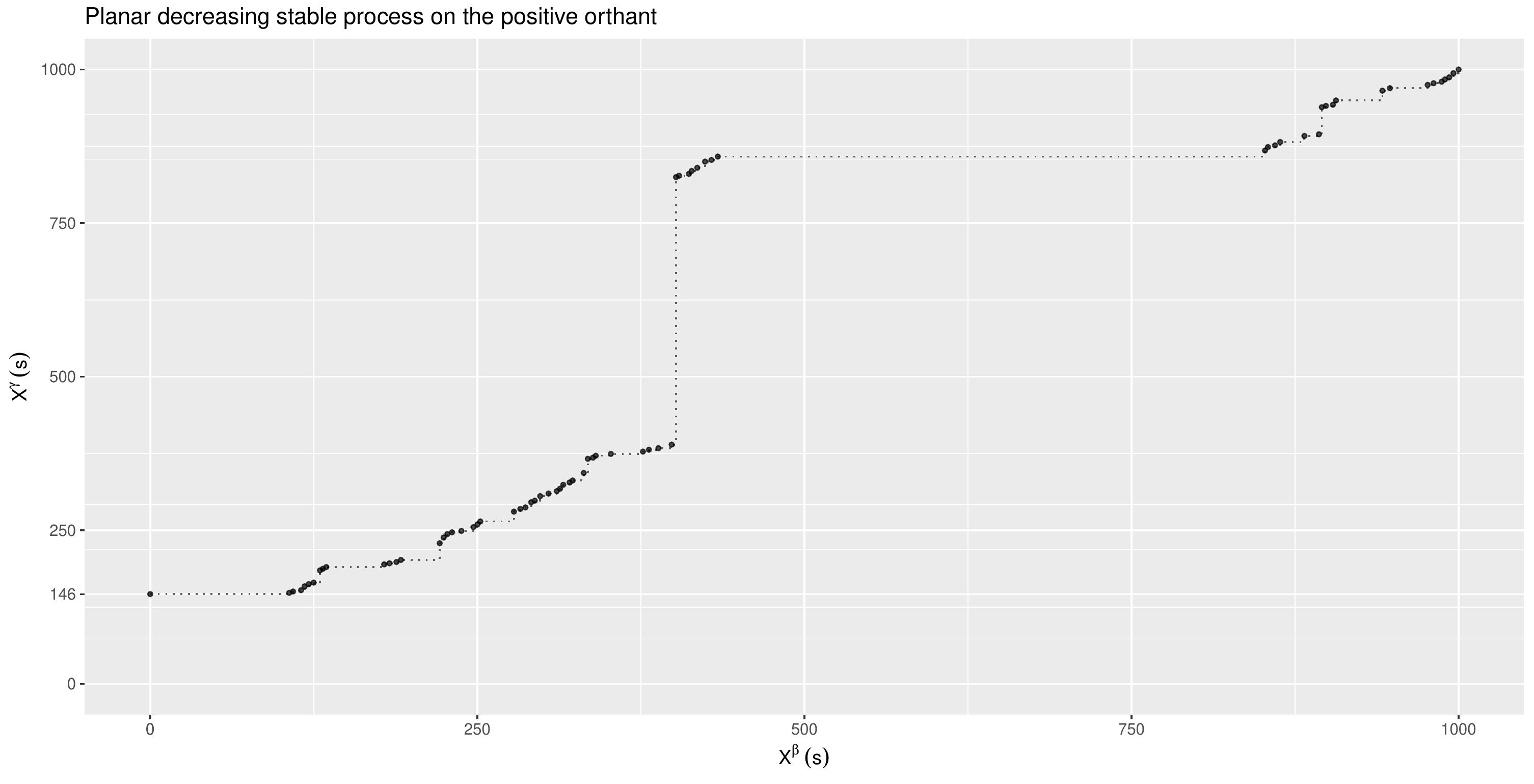}
\caption{Sample path of $X_{t_1, t_2}^{\beta, \gamma}(s)$ until the time $s=\tau_0^{\beta, \gamma}$ when it hits the boundary and $X^{\beta, \gamma}_{t_1, t_2}(\tau_0^{\beta, \gamma}) = (149, 0)$ in this case. Here $\beta = \gamma = 0.8$ and $t_1 = t_2 = 1000$. Made using the R packages ggplot2 \cite{ggplot2} and stabledist \cite{stabledist}.}
\label{fig:ggplotorthant}
\end{figure}

\section{Mixed linear evolution}\label{sec:mixedlinevol}
Consider the problem
\be\label{eq:mixlinbvp}
\begin{array}{rcl}
(-\prescript{}{t_1}D^\beta_{0+*} - \prescript{}{t_2}D^\gamma_{0+*})f(t_1, t_2, x) &=& Lf(t_1, t_2, x),\\
f(0, t_2, x)&=& \phi_1(t_2, x),\\
f(t_1, 0, x)&=&\phi_2(t_1, x).\\
\end{array}
\ee
Here $L$ is the generator of a Feller process $Y_x(s)$ started at $x\in \R^d$. For simplicity we take $L = \nabla\cdot(a(x)\nabla)$, where $a(x)$ is a symmetric, uniformly elliptic and measurable function. This means that $A$ generates a non-degenerate diffusion, with transition densities $G^Y(s, x, y)$ which satisfy Aronsons two-sided estimates (\ref{eq:arnsn}). 
\begin{rem}
Note that we could also obtain estimates for the Greens function in the case when $L$ is, say, a non-isotropic homogeneous pseudo-differential operator of order $\alpha \in (0, 2)$ whose symbol is of the form
\[\psi_\alpha(x, p) = |p|^\alpha w(x, p/|p|), \quad x\in \R^d,\]
where $w(x, \cdot)$ is some strictly positive function on $\mathbb S^{d-1}$. See \cite{eidelman2004analytic, kolokoltsov2000symmetric} for the relevant estimates for $G^Y$ in that case.
\end{rem}
\subsection{Well-posedness of the mixed boundary value problem}
Let us briefly discuss the well-posedness of problem (\ref{eq:mixlinbvp}). We only sketch the main steps, but see \cite[Chapter 8]{kolokoltsov2019differential}, \cite[Theorem 4.20]{hernandez2017generalised} or \cite[Section 4]{kolokoltsov2019mixed} for a full account of well-posedness for these types of problems.

For even more general operators $A$ generating Feller semigroups (and even generalised versions of Caputo-derivatives), one can obtain both uniqueness and the stochastic representation (\ref{eq:dynkinsolution}) of the solution to (\ref{eq:mixlinbvp}) via the Dynkin formula \cite[Theorem 5.1]{dynkin1965markov}.
To obtain existence of a classical solution, the main idea is to first transform the problem to an equivalent one involving zero boundary conditions and Riemann-Liouville fractional derivatives (by introducing a new unknown function $u(t_1, t_2, x) = f(t_1, t_2, x) - \mathbf 1_{\{t_2 > 0\}}\phi_1(t_2, x) - \mathbf 1_{\{t_1 > 0\}}\phi_2(t_1, x)$). This equivalent problem is then the following RL-type mixed boundary value problem,
\begin{align}\label{eq:mixlinbvpRL}
(-\prescript{}{t_1}D^\beta_{0+} - \prescript{}{t_2}D^\gamma_{0+} - A)u(t_1, t_2, x) =&~g_\phi(t_1, t_2, x),\\
u(0, t_2, x) = u(t_1, 0, x) = &~0,\nonumber
\end{align}
where
\[g_\phi(t_1, t_2, x) = (-\prescript{}{t_2}D^\gamma_{0+*} - A)\phi_1(t_2, x) + (-\prescript{}{t_1}D^\beta_{0+*} - A)\phi_2(t_1, x).\]
Notice that here we require $\phi_1$ and $\phi_2$ to be in the domain of the generators $(- \prescript{}{t_2}D^\gamma_{0+*} - A)$ and $(-\prescript{}{t_1}D^\beta_{0+*} - A)$ respectively. The unique solution in the domain of the generator to (\ref{eq:mixlinbvpRL}) is then found by applying the potential operator (of the semigroup $T_s^\beta T_s^\gamma e^{sA}$ generated by $(-\prescript{}{t_1}D^\beta_{0+} - \prescript{}{t_2}D^\gamma_{0+} - A)$) to the forcing term $g_\phi(t_1, t_2,x)$. The solution to the Caputo problem (\ref{eq:mixlinbvp}) is then recovered by undoing the shift by $\phi_1$ and $\phi_2$,
\begin{align*}f(t_1, t_2, x) = \mathbf 1_{\{t_1 = 0\}}\phi_1(t_2, x) &+ \mathbf 1_{\{t_2 = 0\}}\phi_2(t_1, x) \\
& + \mathbf 1_{\{t_1=0\}}\int_0^{t_2}\int_0^\infty e^{rA} G_{\beta, \gamma}(r, s_2)\mathrm dr(-\prescript{}{t_2}D_{0+*}^{\gamma} - A)\phi_1(t_2-s_2, x)\mathrm ds_2\\
& + \mathbf 1_{\{t_2=0\}}\int_0^{t_1}\int_0^\infty e^{rA} G_{\beta, \gamma}(r, s_1)\mathrm dr(-\prescript{}{t_1}D_{0+*}^{\beta} - A)\phi_2(t_1-s_1, x)\mathrm ds_1,
\end{align*}
where $G_{\beta, \gamma}(r, s)$ is the transition density of the process generated by $(-\prescript{}{t_1}D^{\beta}_{0+*} - \prescript{}{t_2}D^{\gamma}_{0+*})$. Rearranging and using \cite[Equation 4.126]{kolokoltsov2019mixed} we have
\[f(t_1, t_2, x) = E^{(\beta, \gamma)}\left[t_1^\beta L^\gamma\right]\phi_1(t_2, x) + E^{(\beta, \gamma)}\left[t_2^\gamma L^\beta\right]\phi_2(t_1, x)\]
for $L^\gamma:= (-\prescript{}{t_2}D_{0+*}^{\gamma} - A)$ and
$L^\beta := (-\prescript{}{t_1}D_{0+*}^{\beta} - A)$. Here $E^{(\beta, \gamma)}[D]$ are \emph{generalised operator-valued Mittag-Leffler functions}, which are introduced and extensively studied in the survey \cite{kolokoltsov2019mixed},
\[E^{(\beta, \gamma)}\left[t_1^\beta L^\gamma\right]\phi_1(t_2, x) = \int_0^\infty e^{s t_1^\beta L^\gamma}\phi_1(t_2, x)\mu_0^\beta(s)~\mathrm ds,\]
where $\mu_0^\beta(s)$ is the density of the exit time $\tau_0^\beta$ (cf. (\ref{eq:dnstextraw})).

\subsection{Estimates for Greens function}
As mentioned in the previous section, an application of the Dynkin formula followed by Doobs optimal stopping theorem gives the following stochastic representation of the solution (whenever it exists) to (\ref{eq:mixlinbvp}),
\be\label{eq:dynkinsolution}
f(t_1, t_2, x) = \mathbb E\left[\phi_1\left(X_{t_2}^\gamma(\tau_0^\beta), Y_x(\tau_0^\beta)\right)\mathbf 1_{\{\tau^\beta_0 < \tau_0^\gamma\}} + \phi_2\left(X_{t_1}^\beta(\tau_0^\gamma), Y_x(\tau_0^\gamma)\right)\mathbf 1_{\{\tau^\gamma_0 < \tau_0^\beta\}}\right]
\ee
A simple conditioning argument (see Appendix \ref{secapp:conditioning}), shows that this solution can be written as
\begin{multline*}
f(t_1, t_2, x) = \int_0^{t_2}\int_{\R^d} \phi_1(r, y)\left(\int_0^\infty G^Y(s, x, y)p^\gamma_s(t_2, r) \mu_0^\beta(s)~\mathrm ds\right)~\mathrm dy\mathrm dr\\
+\int_0^{t_1}\int_{\R^d} \phi_2(r, y)\left(\int_0^\infty G^Y(s, x, y)p^\beta_s(t_1, r) \mu_0^\gamma(s)~\mathrm ds\right)~\mathrm dy\mathrm dr,
\end{multline*}
Inserting (\ref{eq:dnstextraw}) for $\mu_0^\alpha$ and $\mu_0^\beta$,

\begin{align*}
f(t_1, t_2, x) = &~ \int_0^{t_2}\int_{\R^d} \phi_1(r, y)\left(\frac{t_1}\beta \int_0^\infty G^Y(s, x, y)s^{-1-\frac 1\beta - \frac 1\gamma} w_\gamma(r s^{-\frac 1\gamma})w_\beta(t_1 s^{-\frac 1\beta})~\mathrm ds\right)~\mathrm dy\mathrm dr\\
& +\int_0^{t_1}\int_{\R^d} \phi_2(r, y)\left(\frac{t_2}\gamma \int_0^\infty G^Y(s, x, y)s^{-1-\frac 1\beta - \frac 1\gamma} w_\beta(r s^{-\frac 1\beta})w_\gamma(t_2 s^{-\frac 1\gamma})~\mathrm ds\right)~\mathrm dy\mathrm dr\\
=&~\int_0^{t_2}\int_{\R^d}\phi_1(r, y) G_1^{(\beta, \gamma)}(t_1, r, x, y)~\mathrm dy\mathrm dr\\
& + \int_0^{t_1}\int_{\R^d}\phi_2(r, y) G_2^{(\beta, \gamma)}(t_2,r,x, y)~\mathrm dy\mathrm dr
\end{align*}
where $\mu_0^\beta(s)$ and $\mu_0^\gamma(s)$ are the densities of the exit times $\tau_0^\beta$ and $\tau_0^\gamma$, and
\[G^{(\beta, \gamma)}_1(t_1, r, x, y) := \frac{t_1}{\beta}\int_0^\infty G^Y(s, x, y) s^{-1-\frac 1\beta - \frac 1\gamma} w_\gamma(r s^{-\frac 1\gamma}) w_\beta(t_1 s^{-\frac 1\beta})~\mathrm ds,\]
and
\[G^{(\beta, \gamma)}_2(t_2, r, x, y) := \frac{t_2}{\gamma}\int_0^\infty G^Y(s, x, y) s^{-1-\frac 1\beta - \frac 1\gamma} w_\beta(r s^{-\frac 1\beta}) w_\gamma(t_2 s^{-\frac 1\gamma})~\mathrm ds.\]

Thus, the \emph{Greens function} associated to (\ref{eq:mixlinbvp}) are the coordinates of the integral kernel of the operator which acts on the boundary functions $\phi_1$ and $\phi_2$:
\begin{multline*}
f(t_1, t_2, x) = (\phi * G_{full})(t_1, t_2,x) = \int_{\partial \R^2_+\times \R^d}\phi(r_1, r_1, y) G^{\beta, \gamma}_{full}(t_1, t_2, r_1, r_2, x, y)~\mathrm dy\mathrm dr_1\mathrm dr_2\\
= \int_{\partial \R_+ \times \R^d}\phi_1(r_2, y)G^{(\beta, \gamma)}_1(t_1, r_2, x, y)\mathrm dr_2\mathrm dy + \int_{\partial \R_+ \times \R^d} \phi_2(r_1, y) G^{(\beta, \gamma)}_2(t_2, r_1, x, y)\mathrm dr_1\mathrm dy\\
= (\phi_1 * G_1^{(\beta, \gamma)})(t_1, x) + (\phi_2 * G_2^{(\beta, \gamma)})(t_2, x).
\end{multline*}
\begin{rem}
More generally, the function
\[f(x) = (\phi*G_{L})(x) = \int_{\partial X}\phi(z)G_{L}(x, z)~\mathrm dz,\]
solves the boundary value problem
\begin{align*}
Lf(x) =&~0, \quad x\in X,\\
f(z) =&~\phi(z), \quad z\in \partial X,
\end{align*}
where $\phi$ is a suitable function on the boundary of $X$.
\end{rem}
For this reason, to obtain global two-sided estimates for the \emph{full Greens function} $G_{full} = (G^{(\beta, \gamma)}_1, G^{(\beta, \gamma)}_2)$, it suffices to obtain estimates for $G_1^{(\beta, \gamma)}$, since the estimates for $G^{(\beta, \gamma)}_2$ will be the same up to exchanging coordinates. For the sake of readability we drop the subscripts from $G^{(\beta, \gamma)}_1$ and $t_1$ and look only at the function
\[G^{(\beta, \gamma)}(t, x;r,y) := G^{(\beta, \gamma)}_1(t_1, x;r,y).\]
Making the substitution $s = t^\beta z$, we have
\begin{align}\label{eq:grnspreestim}
G^{(\beta, \gamma)}(t, x;r,y) =&~ t^{-\frac{\beta}\gamma}\int_0^\infty G^Y(t^\beta z, x, y)z^{-1-\frac 1\beta - \frac 1\gamma} w_\gamma(r t^{-\frac \beta\gamma}z^{-\frac 1\gamma}) w_\beta(z^{-\frac 1\beta})~\mathrm dz\nonumber\\
=&~ \int_0^\infty G^{Y, \gamma}(t^\beta z,r, x, y) t^\beta\mu_0^\beta(t^\beta z)~\mathrm dz,
\end{align}
where $G^{Y, \gamma}$ and $\mu_0^\beta$ are as in Lemma \ref{lem:extasmp} and Lemma \ref{lem:spcestm}.
Let $\Omega:= |x-y|^2 t^{-\beta}$, $A = r^\gamma t^{-\beta}$.
\begin{prop}\label{prop:smallA} For $(t,r,x,y)\in (0, \infty)\times (0, t_2)\times \R^d \times \R^d$ and $t_2\in (0, \infty)$, the following estimates hold,
\begin{itemize}
\item For $\Omega \leq 1$,
\be\label{eqprop:gOmleq1}
G^{(\beta, \gamma)}(t,r,x, y) \asymp C  t^{-\frac \beta\gamma - \frac{d\beta}2}A^{-1-\gamma} \left\{
\begin{array}{lc}
C, & d = 1,2, 3,\\
(|\log\left(\Omega(\max\{A^{-1}, 1\}\right)| + 1),& d = 4,\\
\Omega^{2 - \frac{d}2}, & d \geq 5.
\end{array}
\right.
\ee
\item For $\Omega \geq 1$,
\be\label{eqprop:gOmgeq1}
G^{(\beta, \gamma)}(t,r,x, y)\asymp C t^{-\frac \beta\gamma - \frac{d\beta}2} \Omega^{N_1} A^{N_2} \exp\left\{-\left(\Omega(\max\{A^{-1},1\}\right)^{\frac 1{2-\min(\beta, \gamma)}}\right\},\ee
where
\begin{align*}
N_1 &= -\frac d2\left(\frac{1-\alpha}{2-\alpha}\right) + \frac{1-\alpha}{2(2-\alpha)(1-\tilde{\alpha})}\\
N_2 &= -\frac d2 \left(\frac 1{2-\alpha}\right) + \frac 1{2(2-\alpha)(1-\tilde{\alpha})} + \frac 1{2(1-\alpha)} - \frac{2-\gamma}{2\gamma(1-\gamma)}\\
\alpha & = \min(\beta, \gamma)\\
\tilde{\alpha} & = \max(\beta, \gamma).
\end{align*}
\end{itemize}
\end{prop}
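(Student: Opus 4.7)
The starting point is the integral representation \eqref{eq:grnspreestim}, which writes $G^{(\beta,\gamma)}$ as $\int_0^\infty G^{Y,\gamma}(t^\beta z,r,x,y)\,t^\beta\mu_0^\beta(t^\beta z)\,\mathrm dz$. The plan is to substitute the two-sided bounds of Lemmas~\ref{lem:spcestm} and~\ref{lem:extasmp} into this integrand. Each factor changes form at a threshold: $G^{Y,\gamma}$ at $z=A$ (coming from $s=r^\gamma$) and $\mu_0^\beta$ at $z=1$ (coming from $s=t^\beta$). Accordingly I would split the integral in $z$ into the three subintervals $(0,\min(A,1))$, $(\min(A,1),\max(A,1))$ and $(\max(A,1),\infty)$, treating the cases $A\le 1$ and $A\ge 1$ separately. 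In every subinterval the integrand is a power of $z$ times $\exp\{-c\Omega/z\}$, accompanied where applicable by the decay factors $\exp\{-c(z/A)^{1/(1-\gamma)}\}$ (active for $z>A$) and $\exp\{-c\,z^{1/(1-\beta)}\}$ (active for $z>1$).

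For $\Omega\le 1$ the factor $\exp\{-c\Omega/z\}$ is of order unity on the bulk of each subinterval and only cuts the integrand off for $z\lesssim \Omega$. The integral therefore reduces to an integral of the power $z^{1-d/2}$ (coming from the first form of $G^{Y,\gamma}$) or $z^{1/(2(1-\gamma))-d/2}$ (from the second form), cut off below by $\Omega$ and above by the nearest exponential barrier. The three dimension-dependent cases of \eqref{eqprop:gOmleq1} correspond precisely to: $d\le 3$, where the integrand is power-integrable near zero so the answer is set by the upper cut-off and is $\Omega$-free; $d=4$, where the integrand behaves like $z^{-1}$ and a logarithm of the ratio of the two cut-offs appears; and $d\ge 5$, where the integral is dominated by the lower cut-off $z\sim\Omega$, yielding the $\Omega^{2-d/2}$ factor. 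Collecting the prefactors $r^{-1-\gamma}$, $t^{\beta(2-d/2)}$ and the constants coming from $\mu_0^\beta$ and the substitution $s=t^\beta z$ reconstructs the claimed bound.

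For $\Omega\ge 1$ the Laplace / saddle-point method is applied to each subinterval. In the middle subinterval the effective exponent is $\Phi(z)=c\Omega/z+c\,z^{1/(1-\beta)}$ when $A\ge 1$ and $\Phi(z)=c\Omega/z+c(z/A)^{1/(1-\gamma)}$ when $A\le 1$; both are convex with a unique interior minimum obtained by solving $\Omega/z^2\sim\Phi'(z)$. In the outer subinterval both barrier exponentials are active and the saddle is determined by balancing $\Omega/z$ against the dominant growing term. After checking in each parameter range that the candidate saddle $z_\ast$ actually lies inside its subinterval (otherwise the contribution of that subinterval is controlled by its boundary value), a comparison across the subintervals identifies the leading exponential contribution as $\exp\{-[\Omega\max\{A^{-1},1\}]^{1/(2-\alpha)}\}$ with $\alpha=\min(\beta,\gamma)$. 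The Laplace evaluation also supplies the Gaussian width factor $\sqrt{1/|\Phi''(z_\ast)|}$ and the polynomial prefactor $F(z_\ast)$ built from the $z$-powers in $G^{Y,\gamma}$ and $\mu_0^\beta$; substituting the explicit saddle $z_\ast$ and simplifying yields the $\Omega^{N_1}A^{N_2}$ dependence of \eqref{eqprop:gOmgeq1}.

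The main obstacle is the bookkeeping: one has to verify, in every combination of $A\lessgtr 1$ and $\alpha\in\{\beta,\gamma\}$, that (i)~the saddle $z_\ast$ really lies in the region for which it was computed; (ii)~the dominant subinterval is correctly identified so that the subdominant ones do not perturb the leading exponential; and (iii)~the four families of polynomial powers (from $G^Y$, from $w_\gamma$, from $w_\beta$ and from the Gaussian width) combine to give exactly the exponents $N_1$ and $N_2$. The matching lower bound is then almost automatic, since each estimate in Lemmas~\ref{lem:spcestm} and~\ref{lem:extasmp} is two-sided; restricting the integration to a fixed neighbourhood of the dominant saddle furnishes a lower bound of the same order and delivers the claimed $\asymp$ relation.
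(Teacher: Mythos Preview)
Your proposal is correct and follows essentially the same route as the paper: substitute the two-sided bounds from Lemmas~\ref{lem:extasmp} and~\ref{lem:spcestm} into \eqref{eq:grnspreestim}, split at the thresholds $z=A$ and $z=1$, identify the dominant piece (the innermost interval $(0,A\wedge 1)$ for $\Omega\le 1$, the outermost $(A\vee 1,\infty)$ for $\Omega\ge 1$), and evaluate these via the incomplete gamma asymptotics and the Laplace method respectively. The only organisational difference is that the paper dispatches the subdominant intervals $I_2,I_3$ by a direct pointwise comparison of their integrands with $I_1$ and $I_4$, whereas you propose to Laplace-evaluate every piece and compare afterwards; the paper's shortcut is a little cleaner but the outcome is the same.
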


\begin{proof} We sketch the main ideas of the proof here, see Appendix \ref{secapp:proofprp} for the full details. After applying Lemma \ref{lem:extasmp} and Lemma \ref{lem:spcestm} in (\ref{eq:grnspreestim}), we end up with 4 integrals which contribute to the estimate for $G^{(\beta, \gamma)}$. For $\Omega \leq 1$, the main contribution comes from the integral
\[I_1 = t^{-\frac\beta\gamma - \frac{d\beta}2}A^{-1-\frac 1\gamma} \int_0^{A\wedge 1} z^{1-\frac d2}\exp\left\{-\Omega z^{-1}\right\}~\mathrm dz.\]
After a substitution of $w = \Omega z^{-1}$, we immediately recognise the integral form of the incomplete gamma function, see Appendix \ref{secapp:IGF},
\[I_1 = t^{-\frac\beta\gamma - \frac{d\beta}2}A^{-1-\frac 1\gamma}\Omega^{2-\frac d2}\int_{(A^{-1}\vee 1)\Omega}^{\infty} w^{\frac d2 - 3}\exp\{-\Omega w\} ~\mathrm dw.\]
Thus we have the two-sided estimate for $I_1$ for $\Omega \leq 1$,
\[I_1 \asymp C t^{-\frac \beta\gamma - \frac{d\beta}2}A^{-1-\gamma} \left\{
\begin{array}{lc}
C, & d = 1,2, 3,\\
(|\log\left(\Omega(\max\{A^{-1}, 1\}\right)| + 1),& d = 4,\\
\Omega^{2 - \frac{d}2}, & d \geq 5.
\end{array}
\right.
\]
Since the integral $I_1$ is the main contributor to the estimate, this proves (\ref{eqprop:gOmleq1}).
For $\Omega \geq 1$, the main contribution to the estimate comes from the integral
\[I_4 = t^{-\frac \beta \gamma- \frac{d\beta}2}A^{-\frac{2-\gamma}{2\gamma(1-\gamma)}}\int_{A\vee 1}^\infty z^{-\frac d2-1+\frac 1{2(1-\beta)} + \frac 1{2(1-\gamma)}} \exp\left\{-\Omega z^{-1} - A^{-\frac 1{1-\gamma}} z^{\frac 1{1-\gamma}}-z^{\frac 1{1-\beta}}\right\}~\mathrm dz.\]
To estimate this integral, let $\alpha = \min(\beta, \gamma)$ and $\tilde{\alpha} = \max(\beta, \gamma)$. Then as an upper (resp. lower) bound for $I_4$ we replace the powers in the exponential term with $\alpha$ (resp. $\tilde{\alpha}$). That is, the upper estimate
\[I_4 \leq C_1 t^{-\frac \beta \gamma- \frac{d\beta}2}A^{-\frac{2-\gamma}{2\gamma(1-\gamma)}}\int_{A\vee 1}^\infty z^{-\frac d2-1+\frac 1{2(1-\beta)} + \frac 1{2(1-\gamma)}} \exp\left\{-\Omega z^{-1} - A^{-\frac 1{1-\alpha}} z^{\frac 1{1-\alpha}}\right\}~\mathrm dz,\]
and the lower estimate
\[I_4 \geq C_2 t^{-\frac \beta \gamma- \frac{d\beta}2}A^{-\frac{2-\gamma}{2\gamma(1-\gamma)}}\int_{A\vee 1}^\infty z^{-\frac d2-1+\frac 1{2(1-\beta)} + \frac 1{2(1-\gamma)}} \exp\left\{-\Omega z^{-1} - A^{-\frac 1{1-\tilde{\alpha}}} z^{\frac 1{1-\tilde{\alpha}}}\right\}~\mathrm dz.\]
Then an application of Proposition \ref{prop:asymptoticcomputation} from the Appendix proves (\ref{eqprop:gOmgeq1}), and we are done.
\end{proof}

\section{Extension to higher dimension}\label{sec:extndhigherdim}
Let us outline how to extend the previous sections to the case where we have more than two fractional derivatives. Let $\mathcal O$ be the orthant in $\R^k$ defined by
\[\mathcal O:= \{(t_1, \cdots, t_k) \in \R^k, t_i \geq 0, i \in \{1, \cdots, k\}\}.\]
Let $\mathcal O_{i, 0}$ denote the collection of vectors $t_{i,0}$ from $\mathcal O$ whose $i$-th coordinate is zero, 
\[\mathcal O_{i, 0} := \{t_{i,0} = (t_1, \cdots, t_{i-1}, 0, t_{i+1}, \cdots, t_k)\}.\]
Define $h_{i,0}(t)$ to be the projection of $\mathcal O_{i, 0}$ onto the subspace $\mathcal O_i\subset \R^{k-1}$ by removing the coordinate which is zero, that is, $h_{i, 0}(t):\mathcal O_{i, 0}\mapsto \mathcal O_i$
\[h_{i, 0}(t_{i, 0}) = (t_1, \cdots, t_{i-1}, t_{i+1}, \cdots, t_k).\]
We look at the equations on $\mathcal O \times \R^d$,
\begin{align}\label{eq:kdimeqn}
\left(-\sum_{i=1}^k \prescript{}{t_i}D^{\beta_{i}}_{0+*} - L\right)f(t, x) & = 0, &\text{ on } \mathcal O \times \R^d,\\
f(t_{i, 0}, x) & = \phi_i(h_{i, 0}(t_{i, 0}), x), & \text{ on } \mathcal O_{i, 0} \times \R^d,\nonumber
\end{align}
where each $\phi_i$ is a function on $\mathcal O_i \times \R^d$. 
\begin{rem}
In order to have continuity of the solution to the above boundary value problem, we would need to also impose additional boundary conditions in order to ensure that the solution coincides at the points where the boundary meets - i.e, at the origin. Without this additional assumption we only have a \emph{generalised} solution, which is enough for our purposes.
\end{rem}

As before, let $X_{t_i}^{\beta_i}(s)$ denote the process started at $t_i\in \R_+$ generated by $D^{\beta_i}_{0+*}$ where $\beta_i\in (0, 1)$, and let $\tau_0^{\beta_i}$ denote the exit time of this process from $(0, \infty)$,
\[\tau_0^{\beta_i} := \inf \{ s > 0: X^{\beta_i}_{t_i}(s) \leq 0\}.\]
Let $X^{\overline{\beta}}_{t}(s) = (X^{\beta_1}_{t_1}(s), \cdots, X^{\beta_k}_{t_k})$ be the process on $\mathcal O$ generated by 
\[-\prescript{}{t}D^{\overline{\beta}}_{0+*} := -\sum_{i=1}^k \prescript{}{t_i}D^{\beta_{i}}_{0+*},\]
and due to the independence of each process $X^{\beta_i}_{t_i}$, the exit time of $X^{\overline{\beta}}_{t}(s)$ from the orthant $\mathcal O$ is given by
\[\tau_0^{\overline{\beta}} = \min_{i\in \{1, \cdots, k\}}\tau_0^{\beta_i}.\]

For $t\in \R^k_+$, let $B_{i}(t)$ denote the subset of $\mathcal O_i$ defined by
\[B_i(t) := \{ r\in \mathcal O_i,~r_j \leq t_j,~j \neq i\},\]
i.e, $B_i$ consists of elements of the form
\[[0, t_1]\times \cdots \times [0, t_{i-1}]\times [0, t_{i+1}]\times \cdots \times [0, t_k]\in \mathcal O_i.\]
The solution to (\ref{eq:kdimeqn}) is given by
\begin{align*}
f(t, x) &= \mathbb E\left[\sum_{i = 1}^k \phi_i(h_{i, 0}(X^{\overline{\beta}}_{t}(\tau_0^{\overline{\beta}}), Y_x(\tau_0^{\overline{\beta}}))\mathbf 1_{\{\tau_0^{\overline{\beta}} = \tau_0^{\beta_i}\}}\right]\\
& = \sum_{i=1}^k \left( \int_{B_i(t)}\int_{\R^d} \phi_i(r,y))\left(\int_0^\infty p^Y(s, x, y) \prod_{j\neq i}^k p_s^{\beta_j}(t_j, r_j) \mu_0^{\beta_i}(s)~\mathrm ds\right)~\mathrm dy\mathrm dr\right)
\end{align*}
\begin{rem}
The same kind of conditioning argument works here, once the appropriate notation is adapted, see Appendix \ref{secapp:conditioning}.
\end{rem}
Thus the objects we are interested in is
\[G^{(\beta_i)}(t_i, x;r, y) = \int_0^\infty p^Y(s, x, y)\prod_{j\neq i}^k p^{\beta_j}_s(t_j, r_j)\mu_0^{\beta_i}(s)~\mathrm ds,\]
where $(t_i, x) \in \R_+ \times \R^d$, and $(r, y) \in \mathcal O_i \times \R^d$.
Note that
\[\prod_{j\neq i}^k p^{\beta_j}_s(t_j, r_j) = \prod_{j\neq i}^k s^{-\frac 1{\beta_j}}w_{\beta_j}(r_j s^{-\frac 1{\beta_j}})\]
and
\[\mu_0^{\beta_i}(s) = \frac{t_i}{\beta_i}s^{-1-\frac 1{\beta_i}}w_{\beta_i}(t_i s^{-\frac 1{\beta_i}}),\]
thus
\begin{align*}G^{(\beta_i)}(t_i, x;r, y) &= \frac{t_i}{\beta_i}\int_0^\infty p^Y(s, x, y) s^{-1-\sum_{i=1}^k \frac{1}{\beta_i}}\prod_{j\neq i}^k w_{\beta_j}(r_j s^{-\frac 1{\beta_j}})w_{\beta_i}(t_i s^{-\frac 1{\beta_i}})~\mathrm ds\\
& = \frac{t_i}{\beta_i}\int_0^\infty G^{Y}(s, r, x, y) \mu_0^\beta(s)~\mathrm ds
\end{align*}
Focusing on the first coordinate, we have
\begin{align*}G^{(\beta_1)}(t_1, x;r, y) &= \frac{t_1}{\beta_1}\int_0^\infty p^Y(s, x, y) s^{-1-\sum_{i=1}^k \frac{1}{\beta_i}}\prod_{j= 2}^k w_{\beta_j}(r_j s^{-\frac 1{\beta_j}})w_{\beta_1}(t_1 s^{-\frac 1{\beta_1}})~\mathrm ds\\
& = \frac{t_1}{\beta_1}\int_0^\infty G^{Y}(s, r, x, y) \mu_0^{\beta_1}(s)~\mathrm ds
\end{align*}
where $\mu_0^\beta(s)$ is the density of the exit time $\tau_0^{\beta_1}$, and $G^{Y, k}(s, r, x, y)$ is the density of the process 

 $(Y_x(s), X_{r_2}^{\beta_2}(s), X_{r_3}^{\beta_3}(s), \cdots, X^{\beta_k}_{r_k}(s))$. We assume that as usual $Y_x$ is a diffusion process, so that $p^Y$ satisfies Aronsons estimates, 
\begin{align}\label{eq:grnskterms}
G^{Y, k}(s, r, x, y) =&~p^Y(s, x, y)s^{-\sum_{i=2}^k \frac 1{\beta_i}}\prod_{j=2}^k w_{\beta_j}(r_j s^{-\frac 1{\beta_j}})\\
\asymp &~ s^{-\frac d2}\exp\left\{-\frac{|x-y|^2}s \right\} s^{-\sum_{i=2}^k \frac 1{\beta_i}}\prod_{j=2}^k w_{\beta_j}(r_j s^{-\frac 1{\beta_j}})\nonumber\\
\asymp & ~s^{-\frac d2} \exp \left\{-\frac{|x-y|^2}s\right\}\left(s^{k-1}\prod_{j=2}^k r_j^{-1-\beta_j}\mathbf 1_{\{s_j < r_j^{\beta_j}\}}\right.\nonumber\\
& + \cdots\nonumber\\
& +s^{n-1}\prod_{j=2}^n r_j^{-1-\beta_j} \mathbf 1_{\{s_j < r_j^{\beta_j}\}}\prod_{i=n+1}^k f_{\beta_j}(r_is^{-\frac 1{\beta_i}})\mathbf 1_{\{s_i > r_i^{\beta_i}\}}\nonumber\\
&+ \cdots \nonumber\\
& +\prod_{j=2}^k \left. s^{\frac 1{2(1-\beta_j)}}\exp\left\{- c_{\beta_j} \left(r^{-\beta_j}s\right)^{\frac 1{1-\beta_j}}\right\}r_j^{-\frac{2-\beta_j}{2(1-\beta_j)}}\mathbf 1_{\{s_j > r_j^{\beta_j}\}}\right)\nonumber
\end{align}
where the cross terms runs from $n = k - 1$ down to $n = 1$ in the above above and are the mixtures of long and short tails. Note that we use the convention that $\prod_{i=2}^1 = 1$.

Let $A_1 = t_1^{-\beta_1}\prod_{i=2}^k r_i^{\beta_i}$ and $\Omega = |x-y|^2 t^{-\beta_1}_1$.
\begin{conj} For $(t_1, r,x,y) \in (0, \infty)\times \mathcal O_1 \times \R^d \times \R^d$, we have the following two-sided estimates for the Greens function $G^{(\beta_1)}$,
\begin{itemize}
    \item For $\Omega \leq 1$,
\begin{align*}
G^{(\beta_1)}(t_1,r,x,y) \asymp  C t_1^{-\frac{d\beta_1}2}\Pi_1
\left\{
\begin{array}{lc}
C,& d \leq 2k-1,\\
\left|\log\left( \frac{\Omega}{\min\{A_1, 1\}}\right)\right| + 1,& d = 2k,\\
\Omega^{2-\frac d2},& d \geq 2k+1,
\end{array}
\right.
\end{align*}
where $\Pi_1 = \prod_{i=2}^k t_1^{-\frac{\beta_1}{\beta_i}} A_1^{-1-\frac 1{\beta_i}}$.
\item For $\Omega \geq 1$,
\[G^{(\beta_1)}(t_1,r,x,y) \asymp \Pi_2 t_1^{- \frac{d\beta_1}2}A_1^{N_1} \Omega^{N_1} \exp\left\{-\left(\frac{\Omega}{\min\{A_1,1\}}\right)^{\frac 1{2-\alpha}}\right\},\]
where $\Pi_2 = \left(\prod_{i=2}^k t_1^{-\frac{\beta_1}{\beta_i}}A_1^{-\frac{2-\beta_i}{2\beta_i(1-\beta_i)}}\right)$, $\alpha = \min\{\beta_1, \cdots, \beta_k\}$, and the powers $N_1$ and $N_2$ depend on $k$, $d$ and $\beta_i$ for $1 \leq i \leq k$.
\end{itemize}
\end{conj}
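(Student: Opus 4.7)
The plan is to mirror the strategy used for Proposition \ref{prop:smallA}, replacing the four-term decomposition available for $k=2$ with the $2^{k-1}$-term decomposition visible in (\ref{eq:grnskterms}). First, substitute $s = t_1^{\beta_1} z$ in the integral representation for $G^{(\beta_1)}$, so that the exit density factor becomes $t_1^{\beta_1}\mu_0^{\beta_1}(t_1^{\beta_1} z)$, controlled by Lemma \ref{lem:extasmp}, and apply Aronson's estimate (\ref{eq:arnsn}) to the $Y$-factor. Using the dichotomy (\ref{eq:indistbl}) for each stable density $w_{\beta_j}$ with $j \geq 2$, with cutoff $z \asymp r_j^{\beta_j} t_1^{-\beta_1}$, produces a sum of integrals indexed by subsets $S\subseteq \{2,\dots,k\}$ specifying which of the $w_{\beta_j}$ is in its small-argument (stretched-exponential) regime and which is in its large-argument (power-tail) regime.

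For each subset $S$, the integrand has a polynomial prefactor times $\exp\{-\Omega z^{-1}\}$ from Aronson, times $\exp\{-c_{\beta_j}(r_j^{-\beta_j} t_1^{\beta_1} z)^{1/(1-\beta_j)}\}$ for every $j \in S$, and times $\exp\{-z^{1/(1-\beta_1)}\}$ from the $\beta_1$-density when $z$ is large, integrated over the intersection of the appropriate half-lines. I would then identify the dominant contributions in the two regimes of $\Omega$. For $\Omega \leq 1$, the main contribution should come from the ``all long tails'' term ($S = \varnothing$) in the regime where $z$ is small and $\mu_0^{\beta_1}$ is constant; after the substitution $w = \Omega z^{-1}$ this reduces to an incomplete gamma integral just as in the proof of (\ref{eqprop:gOmleq1}). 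The trichotomy in the claim is then dictated by the sign of $d/2 - k$, giving a constant for $d < 2k$, a logarithm for $d = 2k$, and the $\Omega^{2 - d/2}$ scaling for $d > 2k$. One then checks that the remaining subsets yield strictly smaller contributions — a direct comparison once each region is rescaled.

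For $\Omega \geq 1$ the dominant contribution is the opposite extreme: the full subset $S = \{2,\dots,k\}$ paired with the large-$z$ regime of $\mu_0^{\beta_1}$. The integrand then contains a competition between $\Omega z^{-1}$ and the sum of stretched exponentials $\sum_{j=1}^k c_{\beta_j}(r_j^{-\beta_j} t_1^{\beta_1} z)^{1/(1-\beta_j)}$. Following the trick used for (\ref{eqprop:gOmgeq1}), I would sandwich this sum between a single stretched exponential with exponent $1/(1-\alpha)$ (upper bound) and $1/(1-\tilde\alpha)$ (lower bound), where $\alpha = \min_i \beta_i$ and $\tilde\alpha = \max_i \beta_i$, and invoke Proposition \ref{prop:asymptoticcomputation} to read off the leading exponential factor $\exp\{-(\Omega/\min\{A_1,1\})^{1/(2-\alpha)}\}$. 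The exponents $N_1$ and $N_2$ are obtained by substituting the saddle $z^{\ast}\asymp \Omega^{(1-\alpha)/(2-\alpha)}$ into the polynomial prefactor collected from the $k-1$ short-tail densities $f_{\beta_j}$, the $\beta_1$-density, and the Aronson factor.

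The main obstacle will be the combinatorial bookkeeping required to show that, among the $2^{k-1}$ subsets, only $S = \varnothing$ (for $\Omega \leq 1$) and $S = \{2,\dots,k\}$ (for $\Omega \geq 1$) dominate; this requires a uniform comparison of polynomial prefactors against mixed stretched-exponential decay across each region, and for intermediate subsets one must carefully check that the truncated integration domain eats up any apparent gain. A second subtlety is that the natural per-index cutoffs $r_j^{\beta_j} t_1^{-\beta_1}$ are distinct, yet the conjecture aggregates them into the single quantity $A_1 = t_1^{-\beta_1}\prod_{i=2}^k r_i^{\beta_i}$; confirming that this aggregated quantity is what really controls the estimate (rather than a finer product of individual cutoffs) will likely require a case analysis by ordering the $r_j^{\beta_j}$, with $N_1$ and $N_2$ emerging in closed form only after summing the resulting contributions. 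This last step is what I expect to be both the most delicate and the most computationally onerous part of the argument.
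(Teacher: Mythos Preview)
The statement is a \emph{conjecture}, and the paper does not actually prove it. Immediately after the statement the authors write only: ``The calculations used to show the estimates in the case $k=2$ earlier in the article should work the same in this case, since the major contribution to the estimates should be the first term (respectively the last term) in (\ref{eq:grnskterms}) for small $\Omega$ (respectively large $\Omega$). We therefore omit the proof to avoid the cumbersome notation.''

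Your proposal is exactly this intended (but unwritten) argument: substitute $s = t_1^{\beta_1} z$, apply Lemma \ref{lem:extasmp} and Aronson's bound, expand via the $2^{k-1}$-term dichotomy of (\ref{eq:grnskterms}), then argue that the all-long-tails term ($S=\varnothing$) dominates for $\Omega\leq 1$ (handled by the incomplete gamma function) and the all-short-tails term ($S=\{2,\dots,k\}$) dominates for $\Omega\geq 1$ (handled by Proposition \ref{prop:asymptoticcomputation} after the $\alpha/\tilde\alpha$ sandwich). So there is nothing to compare: you and the paper are proposing the same route, and neither carries it out in full.

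Where you go further than the paper is in flagging the obstacles. Your second concern --- that the natural cutoffs $r_j^{\beta_j} t_1^{-\beta_1}$ are distinct for each $j$, whereas the conjecture collapses them into the single product $A_1 = t_1^{-\beta_1}\prod_{i=2}^k r_i^{\beta_i}$ --- is a real issue that the paper does not address at all, and it is not self-evidently true that the aggregated $A_1$ is the right controlling quantity (as opposed to, say, $\min_j r_j^{\beta_j} t_1^{-\beta_1}$ in some regimes). This, together with the combinatorial comparison you mention, is presumably part of why the authors label the statement a conjecture rather than a proposition.
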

The calculations used to show the estimates in the case $k=2$ earlier in the article should work the same in this case, since the major contribution to the estimates should be the first term (respectively the last term) in (\ref{eq:grnskterms}) for small $\Omega$  (respectively large $\Omega$). We therefore omit the proof to avoid the cumbersome notation.
\newpage
\appendix
\section{Conditioning argument}\label{secapp:conditioning}
Recall that $\mu_0^\alpha$ is the density of the random variable $\tau_0^\alpha$ and $p^\alpha_s(t, r)$ are the transition densities of the monotone process $X^\alpha_t(s)$ started at $t\in (0, \infty)$.
\begin{prop} For $\beta, \gamma \in (0, 1)$, 
\be\label{eqprp:cndition}
\mathbb E\left[\phi_1(X_{t_2}^\gamma(\tau_0^\beta), Y_x(\tau_0^\beta))\mathbf 1_{\{\tau_0^\beta < \tau_0^\gamma\}}\right] = \int_0^{t_2}\int_{\R^d}\phi_1(r, y) \int_0^\infty p^Y(s, x, y) p^\gamma_s(t_2, r) \mu_0^\beta(s)~\mathrm ds\mathrm dy\mathrm dr,
\ee
and similarly,
\be\label{eqprp:c2}
\mathbb E\left[\phi_2(X_{t_1}^\beta(\tau_0^\gamma), Y_x(\tau_0^\gamma))\mathbf 1_{\{\tau_0^\gamma< \tau_0^\beta\}}\right] = \int_0^{t_1}\int_{\R^d}\phi_2(r, y) \int_0^\infty p^Y(s, x, y) p^\beta_s(t_1, r) \mu_0^\gamma(s)~\mathrm ds\mathrm dy\mathrm dr.
\ee
\end{prop}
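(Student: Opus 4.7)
The statement is essentially a conditioning identity that encodes the independence of the three processes $X_{t_1}^\beta$, $X_{t_2}^\gamma$, $Y_x$. The plan is to condition on the value of $\tau_0^\beta$, exploit the fact that under our standing assumption the exit time $\tau_0^\beta$ has density $\mu_0^\beta$, and then separate the remaining expectation into a product over the independent processes $X^\gamma$ and $Y$.

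First, I would rewrite the expectation by conditioning on $\tau_0^\beta$:
\[
\mathbb E\left[\phi_1(X_{t_2}^\gamma(\tau_0^\beta), Y_x(\tau_0^\beta))\mathbf 1_{\{\tau_0^\beta < \tau_0^\gamma\}}\right]
= \int_0^\infty \mathbb E\left[\phi_1(X_{t_2}^\gamma(s), Y_x(s))\mathbf 1_{\{s < \tau_0^\gamma\}}\right] \mu_0^\beta(s)\,\mathrm ds,
\]
which is justified because $\tau_0^\beta$ is independent of both $X_{t_2}^\gamma(\cdot)$ and $Y_x(\cdot)$ and has density $\mu_0^\beta$. The key observation is that the event $\{s < \tau_0^\gamma\}$ depends only on the trajectory of $X_{t_2}^\gamma$ up to time $s$, and it is exactly the event $\{X_{t_2}^\gamma(s) > 0\}$ because $X_{t_2}^\gamma$ is monotone decreasing and absorbed at $0$. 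Consequently, the restriction of the law of $X_{t_2}^\gamma(s)$ to that event has density $p_s^\gamma(t_2,\cdot)$ on $(0,t_2]$, by the very definition of the sub-Markovian transition density of the absorbed process.

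Next, by the independence of $X_{t_2}^\gamma$ and $Y_x$, the inner expectation factors into a product integral:
\[
\mathbb E\left[\phi_1(X_{t_2}^\gamma(s), Y_x(s))\mathbf 1_{\{s < \tau_0^\gamma\}}\right]
= \int_{\R^d}\int_0^{t_2} \phi_1(r,y)\, p_s^\gamma(t_2, r)\, p^Y(s, x, y)\,\mathrm dr\,\mathrm dy.
\]
Substituting this back and applying Fubini's theorem (which is legitimate under any mild integrability assumption on $\phi_1$, e.g.\ boundedness, since the joint integrand is non-negative after taking absolute values) yields \eqref{eqprp:cndition}. The identity \eqref{eqprp:c2} follows by exchanging the roles of $\beta$ and $\gamma$ throughout the argument above.

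The only genuinely delicate point is the identification of the event $\{\tau_0^\gamma > s\}$ with support of $p_s^\gamma(t_2,\cdot)$; once that is in hand, everything reduces to independence and Fubini. I do not expect any real obstacle, since the absorbed process framework from Section~2.1 is set up precisely to make this identification tautological.
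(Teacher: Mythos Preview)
Your proposal is correct and follows essentially the same approach as the paper: condition on $\tau_0^\beta$, identify $\{s<\tau_0^\gamma\}$ with $\{X_{t_2}^\gamma(s)>0\}$ via monotonicity, and then use independence of $X^\gamma$ and $Y$ to disintegrate the inner expectation. The paper carries out the last step as two successive conditionings (on $X_{t_2}^\gamma(s)=r$ and then on $Y_x(s)=y$), whereas you do it in one stroke via the product density, but this is a cosmetic difference.
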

\begin{proof} 
In the LHS of (\ref{eqprp:cndition}) condition first on $\{\tau_0^\beta = s\}$,
\begin{align*}\mathbb E\left[\phi_1(X_{t_2}^\gamma(\tau_0^\beta), Y_x(\tau_0^\beta))\mathbf 1_{\{\tau_0^\beta < \tau_0^\gamma\}}\right] &= \int_0^\infty \mathbb E \left[\phi_1(X_{t_2}^\gamma(s), Y_x(s))\mathbf 1_{\{s < \tau_0^\gamma\}}\right]\mu_0^\beta(s)~\mathrm ds.\\
\intertext{Due to the monotonicity of the process $X^\gamma_{t_2}$, the events $\{\tau_0^\gamma > s\}$ and $\{X^\gamma_{t_2}(s) > 0\}$ are equivalent. Thus we next condition on $\{X_{t_2}^\gamma(s) = r$\},}
&=\int_0^{\infty}\mathbb E\left[\phi_1(X_{t_2}^\gamma(s), Y_x(s))\mathbf 1_{\{X^\gamma_{t_2}(s) > 0\}}\right]\mu_0^\beta(s)~\mathrm ds\\
& = \int_0^{\infty} \int_0^{t_2}\mathbb E\left[\phi_1(r, Y_x(s))\mathbf 1_{\{r > 0\}}\right]\mu_0^\beta(s)p_s^\gamma(t_2, r)~\mathrm dr\mathrm ds.\\
\intertext{Finally conditioning on $\{Y_x(s) = y\}$ and rearranging, we have}
& = \int_0^{t_2} \int_{\R^d}\phi_1(r, y)\int_0^\infty p^Y(s, x, y) p_s^\gamma(t_2, r) \mu_0^\beta(s)~\mathrm ds\mathrm dy \mathrm dr
\end{align*}
where $p^Y(s, x, y)$ are the transition densities of the process $\left(Y_x(s)\right)_{s\geq 0}$ started at $x\in \R^d$. The proof of (\ref{eqprp:c2}) is similar and is omitted.
\end{proof}

\section{Proof of Proposition \ref{prop:smallA}}\label{secapp:proofprp}
Let $A := r^\gamma t^{-\beta}$, $\Omega := |x-y|^2 t^{-\beta}$. First we use Lemma \ref{lem:extasmp} to estimate the density $\mu_0^\beta$, then we use Lemma \ref{lem:spcestm} to estimate the spatial density
\begin{align*}
G^{(\beta, \gamma)}(t, x;r, y)=& ~\int_0^\infty G^{(Y, \gamma)}(t^\beta z, r, x, y)t^\beta \mu_0^\beta(t^\beta z)~\mathrm dz\\
\asymp &~\int_0^1 G^{(Y, \gamma)}(t^\beta z, r, x, y)~\mathrm dz + \int_1^\infty G^{(Y, \gamma)}(t^\beta z, r,x,y)z^{-1+ \frac 1{2(1-\beta)}}\exp\{-c_\beta z^{\frac 1{1-\beta}}\}~\mathrm dz\\
\asymp &~t^{-\frac\beta\gamma-\frac{d\beta}2}\int_0^1 z^{-\frac 1\gamma - \frac d2} \exp\left\{-\Omega z^{-1}\right\} w_\gamma(A^{\frac 1\gamma} z^{-\frac 1\gamma})~\mathrm dz\\
&+~ t^{-\frac \beta\gamma - \frac{d\beta}2} \int_1^\infty z^{-1+\frac 1{2(1-\beta)}-\frac 1\gamma - \frac d2} \exp\left\{-\Omega z^{-1}-c_\beta z^{\frac 1{1-\beta}} \right\} w_\gamma(A^{\frac 1\gamma} z^{-\frac 1\gamma})~\mathrm dz\\
\asymp &~ I_1 + I_2 + I_3 + I_4
\end{align*}
where
\begin{align*}
I_1 & = t^{-\frac \beta \gamma - \frac{d\beta}2} A^{-1-\frac 1\gamma} \int_0^{A\wedge 1} z^{1-\frac d2} \exp\left\{-\Omega z^{-1}\right\}~\mathrm dz~\mathbf 1_{\{A \in \R_+\}}\\
I_2 & = t^{-\frac \beta \gamma- \frac{d\beta}2} A^{-\frac{2-\gamma}{2\gamma(1-\gamma)}}\int_A^1 z^{\frac 1{2(1-\gamma)}-\frac d2}\exp\left\{-\Omega z^{-1} - A^{-\frac 1{1-\gamma}} z^{\frac 1{1-\gamma}}\right\}~\mathrm dz~\mathbf 1_{\{A < 1\}}\\
I_3 & = t^{-\frac \beta \gamma- \frac{d\beta}2} A^{-1-\frac 1\gamma}\int_1^A z^{\frac 1{2(1-\beta)}-\frac d2}\exp\left\{-\Omega z^{-1} - c_\beta z^{\frac 1{1-\beta}}\right\}~\mathrm dz~\mathbf 1_{\{A > 1\}}\\
I_4 & = t^{-\frac \beta \gamma- \frac{d\beta}2}A^{-\frac{2-\gamma}{2\gamma(1-\gamma)}}\int_{A\vee 1}^\infty z^{-\frac d2-1+\frac 1{2(1-\beta)} + \frac 1{2(1-\gamma)}} \exp\left\{-\Omega z^{-1} - A^{-\frac 1{1-\gamma}} z^{\frac 1{1-\gamma}}-z^{\frac 1{1-\beta}}\right\}~\mathrm dz~\mathbf 1_{\{A\in \R_+\}}
\end{align*}

Now we have 4 regimes to consider, which are
\begin{itemize}
    \item {\bf Case 1a):} $A \leq 1$ and $\Omega \leq 1$
    \item {\bf Case 1b):} $A \geq 1$ and $\Omega \leq 1$
    \item {\bf Case 2a):} $A \leq 1$ and $\Omega \geq 1$
    \item {\bf Case 2b):} $A \geq 1$ and $\Omega \geq 1$.
\end{itemize}
By directly comparing the powers of $z, \Omega$ and $A$ in the integrals above, we can reduce our attention to the integrals $I_1$ and $I_4$. Indeed for $\Omega \leq 1$ we have
\[0 = I_3 < I_4 \leq I_2 \leq I_1, \quad A \leq 1,\]
and
\[0 = I_2 < I_4 \leq I_3 \leq I_1, \quad A \geq 1.\]
For $\Omega \geq 1$ we have
\[0 = I_3 < I_1 \leq I_2 \leq I_4, \quad A \leq 1,\]
and
\[0 = I_2 < I_1 \leq I_3 \leq I_4, \quad A \geq 1.\]
Thus we have a preliminary two-sided estimate for $G^{(\beta, \gamma)}(t, r, x, y)$,
\[C_1 I_1 \leq G^{(\beta, \gamma)}(t, r, x, y) \leq C_2 I_1, \quad \text{ for } \Omega \leq 1,\]
and
\[C_3 I_4 \leq G{(\beta, \gamma)}(t, r, x, y) \leq C_4 I_4, \quad \text{ for } \Omega \geq 1,\]
for some constants $C_1, C_2, C_3, C_4$.
\subsection{Estimates for \texorpdfstring{$I_1$}{}}
For the first integral, we have for $A \leq 1$,
\[I_1 = t^{-\frac \beta\gamma} A^{-1-\frac 1\gamma} \int_0^{A} z^{1-\frac d2} \exp\left\{-\Omega z^{-1}\right\}~\mathrm dz~\]
Then for $\Omega \rightarrow 0$ and $A \rightarrow 0$,
\[I_1 \sim C_{\beta, d, \gamma} t^{-\frac\beta\gamma - \frac{d\beta}2} A^{-1-\frac 1\gamma}\left\{
\begin{array}{lc}
1,& d \leq 3,\\
|\log \Omega A^{-1}| + 1,& d = 4,\\
\Omega^{2-\frac d2}, & d \geq 5. 
\end{array}
\right.
\]
For $\Omega \rightarrow \infty$ and $A \rightarrow 0$,
\[I_1 \sim C_{\beta, d, \gamma} t^{-\frac\beta\gamma - \frac{d\beta}2}A^{1-\frac d2 -\frac 1\gamma}\Omega^{-1} \exp\left\{-\Omega A^{-1}\right\}.\]

For $A \geq 1$ we have
\[I_1 = t^{-\frac \beta\gamma  - \frac{d\beta}2} A^{-1-\frac 1\gamma} \int_0^1 z^{1-\frac d2}\exp\left\{-\Omega z^{-1}\right\}~\mathrm dz,\]
so for $\Omega\rightarrow 0$ and $A\rightarrow \infty$,
\[I_1 \sim C_{\beta, d, \gamma} t^{-\frac\beta\gamma - \frac{d\beta}2} A^{-1-\frac 1\gamma}\left\{
\begin{array}{lc}
1,& d \leq 3,\\
|\log \Omega| + 1,& d = 4,\\
\Omega^{2-\frac d2}, & d \geq 5. 
\end{array}
\right.
\]
For $\Omega\rightarrow \infty$ and $A\rightarrow \infty$,
\[I_1 \sim t^{-\frac \beta\gamma - \frac{d\beta}2}A^{-1-\frac 1\gamma}\Omega^{-1} \exp\left\{-\Omega\right\}.\]

\subsection{Estimates for \texorpdfstring{$I_4$}{}}
For $A \leq 1$,
\[I_4 = t^{-\frac\beta\gamma - \frac{d\beta}2}A^{-\frac{2-\gamma}{2\gamma(1-\gamma)}}\int_1^\infty z^n \exp\left\{-\Omega z^{-1} -A^{-\frac 1{1-\gamma}}z^{\frac 1{1-\gamma}} - c_\beta z^{\frac 1{1-\beta}}\right\}~\mathrm dz.\]
Let $\alpha:= \min(\beta, \gamma)$ and $\tilde{\alpha} = \max(\beta, \gamma)$. For bounded $\Omega\leq 1$, we have
\begin{align*}
I_4 \leq & t^{-\frac\beta\gamma - \frac{d\beta}2}A^{-\frac{2-\gamma}{2\gamma(1-\gamma)}}\int_1^\infty z^n \exp\left\{-c_\gamma A^{-\frac 1{1-\gamma}}z^{\frac 1{1-\gamma}} - c_\beta z^{\frac 1{1-\beta}}\right\}~\mathrm dz\\
\leq & t^{-\frac\beta\gamma - \frac{d\beta}2}A^{-\frac{2-\gamma}{2\gamma(1-\gamma)}}\int_1^\infty z^n \exp\left\{-c_\gamma A^{-\frac 1{1-\gamma}}z^{\frac 1{1-\alpha}}\right\}~\mathrm dz\\
\leq & t^{-\frac\beta\gamma -\frac{d\beta}2}A^{-\frac{2-\gamma}{2\gamma(1-\gamma)}}A^{\frac 1{1-\gamma}}\exp\left\{-C A^{-\frac 1{1-\gamma}}\right\},
\end{align*}
where we have used (\ref{eq:laplaceboundary}).
Next we use (\ref{prp:asmextended}) to get for $\Omega \geq 1$,
\begin{align*}
I_4&\leq  t^{-\frac \beta \gamma - \frac{d\beta}2} A^{-\frac{2-\gamma}{2\gamma(1-\gamma)}} \int_1^\infty z^n \exp\left\{-\Omega z^{-1} -A^{-\frac 1{1-\alpha}} z^{\frac 1{1-\alpha}}\right\}~\mathrm dz\\
&\sim C t^{-\frac\beta\gamma - \frac{d\beta}2} A^{-\frac{2-\gamma}{2\gamma(1-\gamma)}} \Omega^{\frac{2(n+1) - c}{2(c+1)}} A^{\frac{2c(n+1)+c}{2(c+1)}}\exp\left\{-C_2 \left(\Omega A^{-1}\right)^{\frac c{c+1}}\right\}, \quad \Omega A^{-1}\rightarrow \infty,
\end{align*}
where $c = \frac 1{1-\alpha}$ and $n = -\frac d2 - 1 + \frac 1{2(1-\beta)} + \frac 1{2(1-\gamma)}$.  Thus
\[I_4\leq C t^{-\frac \beta\gamma - \frac{d\beta}2} \left(\Omega A^{\frac 1{1-\alpha}}\right)^{-\frac{d}{2}\left(\frac{1-\alpha}{2-\alpha}\right)+\frac{1-\alpha}{2(2-\alpha)(1-\tilde{\alpha})}} A^{\frac 1{2(1-\alpha)}-\frac{2-\gamma}{2\gamma(1-\gamma)}}\exp\left\{-C\left(\Omega A^{-1}\right)^{\frac 1{2-\alpha}}\right\}.\]

Finally for $A \geq 1$, we have
\[I_4 = t^{-\frac\beta\gamma - \frac{d\beta}2} A^{-\frac{2-\gamma}{2\gamma(1-\gamma)}}\int_A^\infty z^n \exp\left\{-\Omega z^{-1} - c_\gamma A^{-\frac 1{1-\gamma}}z^{\frac 1{1-\gamma}} - c_\beta z^{\frac 1{1-\beta}}\right\}~\mathrm dz.\]
For bounded $\Omega$, but unbounded $A$ we have
\begin{align*}
I_4 \leq & t^{-\frac\beta\gamma - \frac{d\beta}2} A^{-\frac{2-\gamma}{2\gamma(1-\gamma)}}\int_A^\infty z^n \exp\left\{-c_\beta A^{-\frac 1{1-\gamma}}z^{\frac 1{1-\gamma}} - c_\gamma z^{\frac 1{1-\beta}}\right\}~\mathrm dz\\
\leq & t^{-\frac\beta\gamma - \frac{d\beta}2}A^{-\frac{2-\gamma}{2\gamma(1-\gamma)}}A^n \exp\left\{-c_\beta A^{\frac 1{1-\beta}}\right\}, \quad \text{ for } A\rightarrow \infty.
\end{align*}
For unbounded $\Omega$ and $A$, the term $A^{-\frac 1{1-\gamma}}$ is negligable since $A$ is large, then we apply the usual Laplace approximation Proposition \ref{prop:asymptoticcomputation} to get
\begin{align*}
I_4 \leq & t^{-\frac\beta\gamma - \frac{d\beta}2} A^{-\frac{2-\gamma}{2\gamma(1-\gamma)}}\int_A^\infty z^n \exp\left\{-\Omega z^{-1} - c_\beta A^{-\frac 1{1-\gamma}}z^{\frac 1{1-\gamma}} - c_\gamma z^{\frac 1{1-\beta}}\right\}~\mathrm dz\\
\leq & t^{-\frac\beta\gamma - \frac{d\beta}2}A^{-\frac{2-\gamma}{2\gamma(1-\gamma)}}\int_1^\infty z^n \exp\left\{-\Omega z^{-1} - c_\alpha z^{\frac 1{1-\alpha}}\right\}~\mathrm dz\\
\leq & t^{-\frac \beta \gamma - \frac{d\beta}2} A^{-\frac{2-\gamma}{2\gamma(1-\gamma)}}\int_0^1 w^{-n-2}\exp\left\{-\Omega w -c_\alpha w^{-\frac 1{1-\alpha}}\right\}~\mathrm dw\\
\leq & t^{-\frac \beta \gamma - \frac{d\beta}2} A^{-\frac{2-\gamma}{2\gamma(1-\gamma)}}\Omega^{\frac{n+1}{2-\alpha} - \frac{1}{2(2-\alpha)}}\exp\left\{-\Omega^{\frac 1{2-\alpha}}\right\}
\end{align*}
for $\Omega \rightarrow \infty$ and $A \geq 1$. For the lower bound of $I_4$, simply reverse the role of $\alpha$ and $\tilde{\alpha}$ in each case - otherwise structure of the estimates are the same.

\section{Asymptotic behaviour}
We describe here an important method used in asymptotic analysis, called the Laplace method. Our main references for asymptotic analysis are \cite{fedorjuk1987asymptotics, de1970asymptotic}. The goal of the Laplace method is to approximate integrals of the form
\[\int_a^b g(x) \exp\{-A h(x)\}~\mathrm dx,\]
as $A \rightarrow \infty$. As a motivating example, let $a = 1, b = \infty$, $h(x) = x$ and $g(x) = x^N$ for some integer $N > 0$\footnote{Of course this integral is just the upper-incomplete gamma function, whose asymptotic behaviour is well known.}. In this case, one could integrate by parts $N$ times, until the $x^N$ term dissapears, and one is left with a final integral
\[\int_1^\infty \exp\{-A x\}~\mathrm dx = A^{-1}\exp\{-A\},\]
so that, for sufficiently large $A$,
\[\int_1^\infty x^N \exp\{-A x\}~\mathrm dx = O(1)A^{-1} \exp\{-A\} + O(A^{-N-1}\exp\{-A\}).\]
Now the main idea is that the major contribution to the asymptotic behaviour of
\[\int_a^b g(x) \exp\{-A h(x)\}~\mathrm dx,\]
comes from a neighbourhood around the point at which the function $h(x)$ attains its miniumum value. Outside this neighbourhood, the contributions to the asymptotic behaviour are exponentially small. Our standard references for asymptotic methods are \cite{de1970asymptotic}, \cite{murray2012asymptotic} or \cite{fedorjuk1987asymptotics}. 
Let us assume that $h$ is a real continuous function, and that it attains its minimum at the boundary point $b$, that $h'(b)$ exists and $h'(b) > 0$. Moreover assume that $h(x) > h(b)$ (for $x > b$) and $h(x) \rightarrow \infty$ as $x\rightarrow \infty$. We will not recount the proof, but we state the asymptotic formula, 
\be\label{eq:laplaceboundary}\int_b^\infty g(x)\exp\{-A h(x)\}~\mathrm dx \sim g(b)(A h'(b))^{-1}\exp\{-Ah(b)\}, \quad A\rightarrow\infty.\ee
On the other hand, if the function $h$ has a minimum on the interior of the interval $(b, \infty)$, say at the point $\tilde{b}\in(b, \infty)$. Finally, assume that the derivative $h'(x)$ exists in some neighbourhood of $x = \tilde{b}$, that $h''(\tilde{b})$ exists and that $h''(\tilde{b}) > 0$. Then
\[
\int_b^\infty g(x)\exp\{-Ah(x)\}~\mathrm dx\sim g(\tilde{b})\sqrt{\frac{2\pi}{A h''(\tilde{b})}}\exp\{-Ah(\tilde{b})\}, \quad A\rightarrow\infty.
\]

\begin{prop}\label{prop:asymptoticcomputation}
Let $a> 1$, $N\in \R$, $c > 0$ and $\Omega \geq 1$. Then the following asymptotic formula holds as $\Omega \rightarrow \infty$,
	\[\int_0^1 w^N \exp\{-\Omega w - c w^{-a}\}~\mathrm dw\sim C_1(a, N, c) \Omega^{-\frac{2(N+1) + a}{2(a+1)}}\exp\left\{-C_2(c, a)\Omega^{\frac{a}{a+1}}\right\},\]
where $C_1(a, N, c) = a^{\frac{2(N+1) - 1}{2(a+1)}}c^{\frac{2(N+1) + 1}{2(a+1)}}\sqrt{\frac{2\pi}{a+1}}$, and $C_2(c, a) = c^{-\frac 1{a+1}}[ a^{\frac 1{a+1}} + a^{-\frac a{a+1}}]$.
\end{prop}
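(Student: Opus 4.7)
The statement is a classical application of the Laplace (saddle-point) method with an interior critical point, in the regime $\Omega \to \infty$. The phase $h(w) := \Omega w + c w^{-a}$ on $(0, \infty)$ is strictly convex, with $h'(w) = \Omega - ac\,w^{-a-1}$ vanishing at the unique point $w_* = (ac/\Omega)^{1/(a+1)}$, which lies strictly inside $(0,1)$ once $\Omega$ is sufficiently large. A direct computation produces the minimum value $h(w_*) = c^{1/(a+1)}\bigl(a^{1/(a+1)} + a^{-a/(a+1)}\bigr)\Omega^{a/(a+1)}$, yielding the advertised constant in the exponential, and $h''(w_*) = a(a+1)c\,w_*^{-(a+2)} \asymp \Omega^{(a+2)/(a+1)}$, which sets the width $\Omega^{-(a+2)/(2(a+1))}$ of the Gaussian peak around $w_*$.

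To streamline the bookkeeping I would rescale by $w = w_* s$, converting the integral into
$$w_*^{N+1}\int_0^{1/w_*} s^N \exp\!\Bigl\{-\Lambda\bigl(A_+ s + A_- s^{-a}\bigr)\Bigr\}\,ds,$$
with $\Lambda := \Omega^{a/(a+1)}$, $A_+ := (ac)^{1/(a+1)}$, and $A_- := c^{1/(a+1)} a^{-a/(a+1)}$. By construction, the rescaled phase $\psi(s) := A_+ s + A_- s^{-a}$ attains its minimum at $s = 1$, with $\psi(1) = A_+ + A_-$ and $\psi''(1) = a(a+1) A_-$. Since $\Lambda \to \infty$ and $1/w_* \to \infty$, the interior-critical-point Laplace asymptotic
$$\int_0^\infty s^N e^{-\Lambda \psi(s)}\,ds \sim \sqrt{\tfrac{2\pi}{\Lambda\, \psi''(1)}}\,e^{-\Lambda \psi(1)}$$
applies; multiplying through by $w_*^{N+1}$ and collecting powers of $\Omega$ reproduces the prefactor $C_1(a, N, c)\,\Omega^{-(2(N+1)+a)/(2(a+1))}$ against $\exp\{-C_2(c, a)\Omega^{a/(a+1)}\}$.

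The principal obstacle is justifying that the contribution away from $s = 1$ is genuinely negligible. My plan is to split $(0, 1/w_*) = (0, \tfrac{1}{2})\cup[\tfrac{1}{2}, 2]\cup(2, 1/w_*)$, valid for $\Omega$ large. On $(0, \tfrac{1}{2})$ convexity of $\psi$ gives $\psi(s) \ge \psi(\tfrac{1}{2}) = \psi(1) + c^{1/(a+1)}\bigl[(2^a - 1)a^{-a/(a+1)} - a^{1/(a+1)}/2\bigr]$; the bracket is positive, since multiplying it by $a^{a/(a+1)}$ reduces the claim to the elementary inequality $2^a - 1 > a/2$ for $a > 0$. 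On $(2, 1/w_*)$ one similarly finds $\psi(s) \ge \psi(2) = \psi(1) + c^{1/(a+1)}\bigl[a^{1/(a+1)} - (1 - 2^{-a})a^{-a/(a+1)}\bigr]$, whose bracket is positive by the companion inequality $a + 2^{-a} > 1$. Hence both tail integrals are bounded by $e^{-\kappa \Lambda}$ (with a polynomial correction) for some explicit $\kappa = \kappa(a, c) > 0$, which is exponentially smaller than the leading Gaussian contribution. On the middle window the Taylor expansion $\psi(s) = \psi(1) + \tfrac{1}{2}\psi''(1)(s-1)^2 + O((s-1)^3)$ combined with the substitution $s = 1 + u/\sqrt{\Lambda\psi''(1)}$ reduces the integral to a standard Gaussian over $u \in \R$, produces the $\sqrt{2\pi}$, and completes the asymptotic derivation.
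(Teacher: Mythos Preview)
Your proposal is correct and follows the same route the paper has in mind: the proposition is stated in the appendix immediately after the general Laplace formula for an interior minimum, with no proof given, so the paper treats it as a direct application of that formula. Your write-up simply carries this out in full---locating the critical point $w_*=(ac/\Omega)^{1/(a+1)}$, rescaling, and supplying the tail bounds---which is more detail than the paper itself provides.
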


Further we have the slight extension to the above.

\begin{prop}\label{prp:asmextended}
Let $a, b > 1$, $n\in \R$, and $c := \min(a, b)$. Then as $\Omega A^{-1} \rightarrow \infty$,
\[\int_1^\infty z^n \exp\{-\Omega z^{-1} - A^{-a} z^a - z^b\}~\mathrm dz\sim C_1 \Omega^{\frac{2(n+1) - c}{2(c+1)}}A^{\frac{2c(n+1)+c}{2(c+1)}}\exp\left\{-C_2 \left(\Omega A^{-1}\right)^{\frac c{c+1}}\right\}.\]
\end{prop}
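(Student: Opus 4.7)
The plan is to extend the Laplace-method proof of Proposition \ref{prop:asymptoticcomputation} by treating the extra exponential factor $e^{-z^b}$ as a perturbation. First I would make the substitution $w = z^{-1}$, which converts the integral on $[1,\infty)$ into
\[
\int_0^1 w^{-n-2} \exp\{-\Omega w - A^{-a}w^{-a} - w^{-b}\}\,\mathrm dw,
\]
so that the integrand is concentrated near an interior minimum of the phase $f(w) = \Omega w + A^{-a}w^{-a} + w^{-b}$ on $(0,1)$.

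Next I would locate the dominant critical point. Setting $f'(w)=0$ gives $\Omega = aA^{-a}w^{-a-1} + bw^{-b-1}$, and in the regime $\Omega A^{-1}\to\infty$ with $c=\min(a,b)$ this equation should be balanced by the first two terms of $f$, producing $w_\star \asymp (aA^{-a}/\Omega)^{1/(a+1)}$ when $c=a$ (and symmetrically $w_\star \asymp (b/\Omega)^{1/(b+1)}$ when $c=b$). Substituting back, the minimal value of the phase satisfies
\[
f(w_\star) \sim C_2\,(\Omega A^{-1})^{c/(c+1)},
\]
which produces the exponential factor in the claim. A Gaussian integration over a neighbourhood of $w_\star$ of width of order $f''(w_\star)^{-1/2}$, combined with the polynomial prefactor $w^{-n-2}$ evaluated at $w_\star$, would then yield the stated powers of $\Omega$ and $A$ by the same bookkeeping as in the proof of Proposition \ref{prop:asymptoticcomputation}.

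The delicate step, and the main obstacle, is to justify that the remaining term in the phase -- namely $w^{-b}$ when $c=a$, or $A^{-a}w^{-a}$ when $c=b$ -- acts merely as an exponentially negligible perturbation. For the upper bound this is immediate: since the dropped term is nonnegative, deleting it from the exponent reduces the integral to one already covered by Proposition \ref{prop:asymptoticcomputation} with $N=-n-2$ and multiplicative constant $A^{-a}$ (respectively $1$). For the lower bound I would restrict the integration to a neighbourhood $(w_\star/2,\,2w_\star)$ of the critical point, note that on this interval the omitted term $w^{-\max(a,b)}$ is comparable to $w_\star^{-\max(a,b)}$, and then use the relations $\Omega w_\star \asymp A^{-a}w_\star^{-a} \asymp (\Omega A^{-1})^{c/(c+1)}$ to verify that $w_\star^{-\max(a,b)} = o\bigl((\Omega A^{-1})^{c/(c+1)}\bigr)$ under $\Omega A^{-1}\to\infty$. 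Once this negligibility is established, the remaining Gaussian integration reproduces the leading asymptotic of the two-term case, which is precisely the claimed formula.
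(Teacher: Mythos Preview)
The paper does not actually supply a proof of this proposition: it is stated as a ``slight extension'' of Proposition~\ref{prop:asymptoticcomputation} and left at that. So there is nothing to compare your argument against, and your Laplace-method outline is the natural route. However, the key negligibility step you identify as ``delicate'' is not just delicate --- as written it is false.

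Take the case $c=a<b$ and fix $A=1$, $\Omega\to\infty$ (so certainly $\Omega A^{-1}\to\infty$). Your candidate critical point is $w_\star\asymp(a/\Omega)^{1/(a+1)}$, and the term you wish to discard is $w_\star^{-b}\asymp\Omega^{b/(a+1)}$, while the retained phase value is $(\Omega A^{-1})^{a/(a+1)}=\Omega^{a/(a+1)}$. Since $b>a$ the ratio $\Omega^{(b-a)/(a+1)}\to\infty$, so the ``perturbation'' dominates rather than vanishes. The true saddle in this regime balances $\Omega w$ against $w^{-b}$, giving a phase of order $\Omega^{b/(b+1)}$, which does \emph{not} equal the proposition's claimed $(\Omega A^{-1})^{c/(c+1)}=\Omega^{a/(a+1)}$. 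The symmetric case $c=b<a$ has the same problem: your balance gives $\Omega^{b/(b+1)}$, not $(\Omega/A)^{b/(b+1)}$, and the discarded term $A^{-a}w_\star^{-a}\asymp A^{-a}\Omega^{a/(b+1)}$ is again not controlled by $\Omega/A$ alone.

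The upshot is that the asymptotic, with the single parameter $\Omega A^{-1}\to\infty$, cannot hold as a genuine $\sim$ statement without further restrictions on the regime (for instance $A\le 1$, which is how the paper actually uses it in Appendix~\ref{secapp:proofprp}, and even there only as a one-sided bound after first replacing both growing terms by $A^{-1/(1-\alpha)}z^{1/(1-\alpha)}$). Your upper-bound half is fine; for the lower bound you would need to impose and use such an extra hypothesis explicitly, and then redo the balance accordingly.
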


\subsection{Incomplete Gamma function}\label{secapp:IGF}
Here we give some formulas that we use in the main body of the article. The upper complete Gamma function, is defined by
\[\Gamma(s, A) = \int_A^\infty y^{s-1} \exp\{-y\}~\mathrm dy.\]
Equivalently after a change of variables $y = Aw$,
\[\Gamma(s, A) = A^{s}\int_1^\infty w^{s-1} \exp\{-Aw\}~\mathrm dw.\]
We have the following asymptotic behaviour of $\Gamma(s, A)$ for $A \rightarrow 0$,
\begin{align*}
\Gamma(s, A) \sim \left\{
\begin{array}{lc}
-s^{-1} A^{s},& s < 0,\\
(|\log A| + 1),& s = 0,\\
1 - s^{-s} A^s,& s > 0.
\end{array}
\right.
\end{align*}
Thus, for $A \leq 1$, 
\begin{align*}
A^{-s}\Gamma(s, A) \leq C_s\left\{
\begin{array}{lc}
1,& s < 0,\\
(|\log A|+ 1),& s = 0,\\
A^{-s}, & s > 0.
\end{array}
\right.
\end{align*}
For $A \rightarrow \infty$, we use the Laplace method (\ref{eq:laplaceboundary}) with $h(x) = x$, $b=1$, $g(x) = x^{s-1}$,
\[A^{-s}\Gamma(s, A) \sim A^{-1}\exp\{-A\}, \quad A\rightarrow \infty.\]

\bibliography{}
\bibliographystyle{alpha}

\end{document}